\theoremstyle{plain}
\newtheorem{thm}{Theorem}[section]
\newtheorem{cor}[thm]{Corollary} 
\newtheorem{lemma}[thm]{Lemma} 
\newtheorem{prop}[thm]{Proposition}
\theoremstyle{remark}
\newtheorem{remark}[thm]{Remark}
\theoremstyle{definition}
\newtheorem{defn}[thm]{Definition}
\newtheorem{ques}[thm]{Question}
\newtheorem{prob}[thm]{Problem}
\def\today{{\number\day\space
 \ifcase\month\or
  January\or February\or March\or April\or May\or June\or
  July\or August\or September\or October\or November\or December\fi
 \space\number\year}}
\newcommand\Bfr{{\mathfrak B}}
\newcommand\CFnc{{C^*(\mathbb F(n,c))}}
\newcommand\CFnm{{C^*(\mathbb F(n,m))}}
\newcommand\Cpx{{\mathbb C}}
\newcommand\eps{\epsilon}
\newcommand\et{{\tilde e}}
\newcommand\Mcal{{\mathcal{M}}} 
\newcommand\Nats{{\mathbf N}}
\newcommand\qt{{\tilde q}}
\newcommand\smd[2]{\underset{#2}{#1}}
\newcommand\tr{{\mathrm{tr}}}
\newcommand\ut{{\tilde u}}
\newcommand{\cl}[1]{\mathcal{#1}}
\newcommand{\bb}[1]{\mathbb{#1}}
\begin{document}

\title[Synchronous correlation matrices]{Synchronous correlation matrices and Connes' embedding conjecture}

\author[Dykema]{Kenneth J.\ Dykema}
\address{K.\ Dykema, Department of Mathematics, Texas A\&M University,
College Station, TX 77843-3368, USA}
\email{kdykema@math.tamu.edu}
\thanks{The first author was supported in part by NSF grant DMS-1202660.}

\author[Paulsen]{Vern Paulsen}
\address{V.\ Paulsen, Department of Mathematics, University of Houston,
Houston, TX, USA}
\email{vern@math.uh.edu}

\date{24 March, 2015}

\begin{abstract} In \cite{PSSTW} the concept of synchronous quantum correlation matrices was introduced and these were shown
to correspond to traces on certain C*-algebras. In particular, synchronous correlation matrices arose in their study of
 various versions of quantum chromatic numbers of
graphs and other quantum versions of graph theoretic parameters. 
In this paper we develop these ideas further, focusing on the relations between synchronous correlation matrices 
and microstates. We prove that 
Connes' embedding conjecture is equivalent to the equality of two families of synchronous quantum correlation matrices. 
We prove that if Connes' embedding conjecture has a positive answer, then  the tracial rank and projective rank 
are equal for every graph. We then apply these results to more general non-local games. 
\end{abstract}

\maketitle

\section{Introduction}

The chromatic number of a graph is the minimum number of different colors required to color the vertices so that
no edge connects vertices of the same color.

In \cite{galliardwolf, AHKS06, cnmsw, SS12} the concept of the quantum chromatic number $\chi_{q}(G)$
of a graph $G$ was developed and inequalities for estimating this parameter,
as well as methods for its computation, were presented.
In \cite{pt_chrom} several new variants of the quantum chromatic number, especially, 
 $\chi_{qc}(G)$ and $\chi_{qa}(G)$, were introduced.  
The motivation behind these new chromatic numbers came from the 
conjectures of Tsirelson and Connes and the fact that the set of correlations of 
quantum experiments may possibly depend on which set of quantum mechanical axioms one 
chooses to employ.
 Given a graph $G$, the aforementioned chromatic numbers satisfy the inequalities
\[
  \chi_{qc}(G) \le \chi_{qa}(G)  \le \chi_{q}(G) \le \chi(G),
\]
where $\chi(G)$ denotes the classical chromatic number of the graph $G$.
If the strong form of Tsirelson's conjecture is true, then $\chi_{qc}(G) = \chi_{q}(G)$ for every graph $G$, while if 
Connes' Embedding 
Conjecture is true, 
then $\chi_{qc}(G) = \chi_{qa}(G)$ for every graph $G$.  
Thus, computing these invariants gives a means to test the corresponding conjectures.

The fractional chromatic number $\chi_{f}(G)$ of a graph $G$ is an important lower bound on $\chi(G).$ 
 D.~Roberson and L.~Man\v{c}inska~\cite{arxiv:1212.1724} introduced a non-commutative analogue of the fractional 
 chromatic number, which they called the 
{\it projective rank} of $G$,  denoted $\xi_{f}(G)$ 
and proved that $\xi_{f}(G)$ is a lower bound for $\chi_{q}(G)$.
However, it is still not known if $\xi_{f}(G)$ is a lower bound for 
the variants of the quantum chromatic number studied in \cite{pt_chrom}.

The paper \cite{PSSTW} introduced a new C*-algebra built from a graph and using traces on these algebras introduced a parameter $\xi_{tr}(G)$ which they called the 
\emph{tracial rank} and showed that it is a lower bound for $\chi_{qc}(G)$. 
They also gave a new interpretation of the projective rank and fractional chromatic number,
by proving that if one restricted the C*-algebras in the definition of the tracial rank to be 
either finite dimensional C*-algebras or abelian, then one obtained the projective rank and fractional chromatic number, 
respectively. 

In this paper, we prove that if Connes' embedding conjecture is true, then the tracial and projective ranks 
are equal for all graphs.

A key result of \cite{PSSTW}, that allowed the introduction of traces, was a correspondence between
certain quantum correlations, called ``synchronous correlations'' and
traces on a
particular
C*-algebra.
In this paper we further develop the theory of synchronous correlations. 
Using the equivalence of the microstates conjecture with Connes' embedding conjecture, we are able to prove that 
Connes' embedding conjecture is equivalent to equality of two families of sets of synchronous quantum correlations. 

In \cite{PSSTW}, it was noted that the graph theoretic parameters that they were studying all belonged to a family of two person games that they called ``synchronous games''.  We further develop the connection between traces and synchronous games.  In particular, we introduce the ``synchronous value'' of a two person game and show that it is equal to the supremum of
the values of
all tracial states on a fixed element  of a particular C*-algebra. 

\section{Quantum correlation matrices and Non-Local Games}
\label{sec:Qmats}

Imagine that two non-communicating players, Alice and Bob, receive inputs from some finite set $X$ of cardinality $n$ and produce 
outputs  belonging to some finite set $O$ of cardinality $m.$ 

The game $\cl G$ has ``rules'' given by a function
\[ \lambda: X \times X \times O \times O \to \{0,1 \} \]
where $\lambda(x,y,a,b) =0$ means that if Alice and Bob receive inputs $x,y,$ respectively, 
then producing respective outputs $a,b$ is ``disallowed''.

The game is {\bf synchronous} if whenever they receive the same input, 
they must produce the same output, i.e.,  $\lambda (x,x, a,b) = 0, \, \forall a \ne b.$

A {\bf  quantum strategy} for a game $\cl G$ means that  Alice and Bob have finite 
dimensional  Hilbert spaces $H_A$, $H_B$   and for each input  $x \in X$ 
Alice has a projective measurement $\{ E_{x,a} \}_{a \in O}$ on $H_A$, i.e., for each $x \in X$ Alice has a set of projections satisfying,
 $\sum_{a\in O}E_{x,a}=I$,
and, similarly, for each input $y \in X$ 
Bob has a projective measurement $\{ F_{y,b} \}_{b \in O}$ on $H_B$;
moreover,
they share a state $\psi \in H_A \otimes H_B,$ i.e., a unit vector.   In this case
\[ p(a,b|x,y) := \langle  E_{x,a} \otimes F_{y,b}  \psi, \psi \rangle \]
is the probability of getting outcomes $a,b$ given inputs $x,y.$ 

The set of $n \times m$ matrices of the form $\big( p(a,b|x,y) \big)$,
 arising from all choices of finite dimensional Hilbert spaces $H_A$ and $H_B$,  all projective measurements,
and all unit vectors,
is called the set of {\bf quantum correlation matrices}
and is, usually, denoted $Q(n,m).$
For
a slight improvement of 
notation 
from \cite{PSSTW} we set $C_q(n,m):= Q(n,m)$
and similarly for related sets of correlation matrices, as described below.

A quantum strategy is called
a {\bf winning quantum strategy} for the game if the probability of it ever producing a
disallowed output is 0. Thus, a winning quantum strategy
is
$\big( p(a,b|x,y) \big) \in C_q(n,m)$
such that
\[ \lambda(x,y,a,b) =0 \implies p(a,b|x,y) =0.\]

If the game is synchronous, then a winning quantum strategy must satisfy $p(a,b|x,x) =0$ for all $x$ and for all $a \ne b,$ and, consequently,
we call such a correlation tuple {\bf synchronous}.
We let $C^s_q(n,m)$ denote the set of all synchronous quantum correlation matrices.


By a {\bf commuting quantum strategy} we mean that there is a single
(possibly infinite dimensional)
Hilbert space $H$, 
and for each $x \in X$ Alice has a projective measurement $\{ E_{x,a} \}_{a \in O}$ on $H$,
and for each $y \in X$ Bob has a projective measurement $\{F_{y,b} \}_{b \in O} $ on $H$, 
satisfying $E_{x,a}F_{y,b} = F_{y,b}E_{a,x}, \, \forall x,y,a,b$  and they share a state $\psi \in H.$  
In this case the probabilities are given by
\[ p(a,b|x,y) = \langle E_{x,a}F_{y,b} \psi, \psi \rangle.\]
We let $C_{qc}(n,m)$ denote the set of {\bf commuting quantum correlation matrices},
namely, those $n \times m$ matrices $(p(a,b,|x,y))$ arising as described above,
and
we let
$C^s_{qc}(n,m)$ denote the subset of synchronous
commuting quantum correlation
matrices.
Clearly, $C_q(n,m)\subseteq C_{qc}(n,m)$.

The {\bf strong Tsirelson conjecture} is the conjecture that $C_q(n,m) = C_{qc}(n,m)$ for all $n,m.$

It is known that the set $C_{qc}(n,m)$ is closed but it is still not known if $C_q(n,m)$ is closed. So we set
$C_{qa}(n,m)$ equal to the closure of $C_q(n,m)$ and let $C^s_{qa}(n,m)$ denote the synchronous elements
of $C_{qa}(n,m).$

The {\bf weak  Tsirelson conjecture} is the conjecture that $C_{qa}(n,m) = C_{qc}(n,m)$ for all $n,m.$

Junge, Navascues, Palazuelos, Perez-Garcia, Scholtz and Werner \cite{jnppsw} proved that if Connes' embedding conjecture is true, then the 
weak Tsirelson conjecture is true. Recently, Ozawa \cite{oz} proved the converse, so we now know that the weak
Tsirelson conjecture and Connes' embedding conjecture are equivalent.


A {\bf classical strategy} or {\bf local strategy} is any commuting quantum strategy for which 
all the measurement operators commute. The set of these correlation matrices is generally denoted $LOC(n,m)$, but for consistency
of notation we denote these by $C_{loc}(n,m)$ and we let $C^s_{loc}(n,m)$ denote the synchronous matrices in this set.

In summary, we have four types of correlation matrices
\[ C_{loc}(n,m) \subseteq C_q(n,m) \subseteq C_{qa}(n,m) \subseteq C_{qc}(n,m),\]
together with their synchronous subsets
\[ C_{loc}^s(n,m) \subseteq C_q^s(n,m) \subseteq C_{qa}^s(n,m) \subseteq C_{qc}^s(n,m).\]

One important example of a synchronous game is the {\bf graph coloring game}. Given a graph $G=(V,E)$ on $n$ 
vertices where $V$ denotes the vertex set and $E \subseteq V \times V$ denotes the set of edges, we write $v \sim w$ whenever
$(v,w) \in E.$ In the graph coloring game the inputs are the vertices, i.e., $X=V$ and the outputs are a set of $c$ colors,
so without loss of generality, $O = \{ 1, \ldots, c \}.$  The rules are that whenever $v=w$ then Alice and Bob must both output the 
same color, so the game is synchronous, and whenever $v \sim w$ then they must output different colors.

The quantum coloring number, $\chi_x(G)$ for $x= loc, q, qa, qc$ is the least integer $c$ for which there exists a winning strategy
corresponding to a correlation matrix in $C^s_x(n,c).$ Interestingly, the classical chromatic number $\chi(G)$  is equal to
$ \chi_{loc}(G)$~\cite{pt_chrom}. 


In \cite[Theorem~5.4]{PSSTW} an important connection was made between synchronous correlation matrices and traces on C*-algebras.
Recall that a positive linear functional $\tau: \cl A \to \bb C$ on a unital C*-algebra $\cl A$ is called a {\bf tracial state} provided that $\tau(1) =1$ and $\tau(uv) = \tau (vu)$ for all $u,v \in \cl A.$
We summarize their result below.

\begin{thm}[\cite{PSSTW}]\label{synctr} $\big( p(a,b|x,y) \big) \in C^s_{qc}(n,m)$ if and only if there exists a unital C*-algebra $\cl A$ generated by projections
 $\{ e_{x,a} \}_{1 \le x \le n, 1 \le a \le m}$ satisfying $\sum_{a=1}^m e_{x,a} = 1, \, \forall x$ and a tracial state $\tau: \cl A \to \bb C$ such that
 \[ p(a,b|x,y) = \tau(e_{x,a}e_{y,b}), \forall x,y,a,b.\]
 Moreover, $\big( p(a,b|x,y) \big) \in C^s_q(n,m)$ if and only if the C*-algebra $\cl A$ can be taken to be finite dimensional,
 and $\big( p(a,b|x,y) \big) \in C^s_{loc}(n,m)$ if and only if the C*-algebra $\cl A$ can be taken to be abelian.
\end{thm}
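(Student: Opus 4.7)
The plan is to prove the two implications separately, with the key technical step being a \emph{synchrony lemma} that forces $E_{x,a}\psi = F_{x,a}\psi$ whenever $(p(a,b|x,y))$ is synchronous.

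For the necessity direction, suppose $(p(a,b|x,y))\in C^s_{qc}(n,m)$ is produced by commuting projective measurements $\{E_{x,a}\}$, $\{F_{y,b}\}$ on $H$ with unit vector $\psi$. First I would establish the synchrony lemma: for every $a\neq b$,
\[
\|E_{x,a}F_{x,b}\psi\|^{2}=\langle F_{x,b}E_{x,a}F_{x,b}\psi,\psi\rangle=\langle E_{x,a}F_{x,b}\psi,\psi\rangle=p(a,b|x,x)=0,
\]
using the commutation $E_{x,a}F_{x,b}=F_{x,b}E_{x,a}$ and idempotency. Expanding $I=\sum_{b}F_{x,b}$ then forces $E_{x,a}\psi = E_{x,a}F_{x,a}\psi = F_{x,a}\psi$, and by symmetry the same vector equals $F_{x,a}\psi$. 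Next, let $\mathcal A$ be the universal unital C*-algebra generated by projections $\{e_{x,a}\}$ with $\sum_a e_{x,a}=1$, let $\pi(e_{x,a})=E_{x,a}$, and define $\tau(T):=\langle\pi(T)\psi,\psi\rangle$. The hard step is to verify cyclicity $\tau(ST)=\tau(TS)$; I would do this on generators, using the trick
\[
\langle E_{x,a}A\psi,\psi\rangle = \langle A\psi,E_{x,a}\psi\rangle=\langle A\psi,F_{x,a}\psi\rangle=\langle F_{x,a}A\psi,\psi\rangle=\langle AF_{x,a}\psi,\psi\rangle=\langle AE_{x,a}\psi,\psi\rangle,
\]
where $A$ is any word in the $E$'s and we swap one generator from the left to the right by first passing to its $F$-counterpart through $E_{x,a}\psi=F_{x,a}\psi$, using commutation of $E$'s with $F$'s, then swapping back. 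Induction on word length completes the trace property, and $\tau(e_{x,a}e_{y,b})=p(a,b|x,y)$ follows immediately.

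For the sufficiency direction, given $\mathcal A$, projections $\{e_{x,a}\}$ and a tracial state $\tau$, I would perform the GNS construction to obtain $\pi:\mathcal A\to B(K)$ with cyclic tracial vector $\xi=\hat 1\in K=L^{2}(\mathcal A,\tau)$. Set $E_{x,a}=\pi(e_{x,a})$; because $\tau$ is a trace there is a commuting right action $R:\mathcal A^{\mathrm{op}}\to B(K)$ sending projections to projections, so $F_{y,b}:=R(e_{y,b})$ is a projective measurement commuting with every $E_{x,a}$ (each $\sum_b F_{y,b}=R(1)=I$). Then
\[
\langle E_{x,a}F_{y,b}\xi,\xi\rangle = \langle \widehat{e_{x,a}e_{y,b}},\hat 1\rangle = \tau(e_{x,a}e_{y,b}),
\]
giving the desired commuting quantum strategy in $C^{s}_{qc}(n,m)$; synchrony is immediate from orthogonality of $\{e_{x,a}\}_{a}$.

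For the finite-dimensional refinement, observe that the necessity proof works verbatim when the strategy comes from the tensor product model $H_A\otimes H_B$ with $E_{x,a}\otimes I$ and $I\otimes F_{y,b}$: the C*-algebra $\mathcal A:=C^*(E_{x,a}:x,a)\subset B(H_A)$ is finite-dimensional. Conversely, a finite-dimensional $\mathcal A\cong\bigoplus_i M_{k_i}$ with a trace has finite-dimensional GNS space, and the standard purification $\psi=\sum_i\sqrt{\alpha_i}\,\Omega_i\in H_A\otimes H_B$ (where $\Omega_i$ is the maximally entangled vector in $\mathbb C^{k_i}\otimes\mathbb C^{k_i}$) realises the trace as a vector state with left and right multiplications acting on the two tensor factors separately. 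For the abelian case, $\mathcal A\cong C(X)$ with $\tau$ a probability measure $\mu$ and each $\{e_{x,a}\}_a$ a measurable partition of $X$, which is precisely a local (shared-randomness) strategy, yielding membership in $C^{s}_{loc}(n,m)$.

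The main technical obstacle is the trace property of $\tau$ in the $qc$ direction; it rests entirely on the synchrony lemma $E_{x,a}\psi=F_{x,a}\psi$, without which the interchange argument fails. Everything else is bookkeeping around the GNS/right-action construction and the standard purification of traces on finite-dimensional C*-algebras.
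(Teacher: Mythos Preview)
Your argument is correct and is essentially the proof given in \cite{PSSTW}; note, however, that the present paper does not itself prove Theorem~\ref{synctr} but only quotes it from \cite{PSSTW} (see the sentence ``We summarize their result below'' preceding the theorem). The synchrony identity $E_{x,a}\psi=F_{x,a}\psi$, the swap-through-$F$ trick to establish the trace property, the GNS right action for the converse, and the purification via maximally entangled vectors in the finite-dimensional case are exactly the ingredients of the original proof, so there is nothing to contrast.
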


\medskip

In order to bound quantum chromatic numbers,
the paper
\cite{PSSTW} introduced 
some parameters
of a graph, denoted $\xi_x,$ for $x \in \{ loc, q, qa, qc \}$ that had many nice properties,
including the fact that
they are
multiplicative for strong graph product.

Given a correlation matrix $\big( p(a,b|v,w) \big)$, the {\bf marginal probability} that Alice produces output $a$ given input $v$ is
\[ p_A(a,v) = \langle E_{v,a} \psi, \psi \rangle . \]
Note that
\[ p_A(a,v) = \sum_{b=1}^m p(a,b|v,w) ,\]
where the sum is independent of $w.$  Similarly, the marginal probability that Bob produces output $b$ given input $w$ is 
\[p_B(b|w) = \langle F_{w,b} \psi, \psi \rangle = \sum_{a=1}^m p(a,b|v,w).\]
 
When $\big( p(a,b|v,w) \big)$ is synchronous, it follows from Theorem~\ref{synctr} that $p_A(a|v) = p_B(a|v).$

\begin{defn}\label{def:xiX} 
Let $G$ be a graph on $n$ vertices.
    For $x \in \{loc, q, qa, qc \}$, let $\xi_{x}(G)$ be the infimum of the 
    positive real numbers
    $t$ such that there exists $\big(p(a,b|v,w) \big)_{v,a,w,b} \in C^s_{x}(n,2)$ 
    satisfying
\begin{gather*}
p_A(1|v)= p_B(1,v)= t^{-1}, \, \forall v, \\
v \sim w \implies p(1,1 | v, w)= 0.
\end{gather*}
\end{defn}

We have the following summary of the results in \cite{PSSTW}
(see \cite[Definition~5.9, Proposition~5.10, Theorem~6.8, Theorem~6.11]{PSSTW}).

\begin{thm}[\cite{PSSTW}] \label{xichar} Let $G$ be a graph on $n$ vertices. Then
$\xi_{qc}(G)$ is the reciprocal of the supremum of the set of all real numbers $\lambda$ for which there exists a unital C*-algebra
$\cl A$ generated by projections $\{ e_v \}_{v \in V}$ and a tracial state $\tau: \cl A \to \bb C$ satisfying:
\begin{align*}
 \tau(e_v) \ge \lambda, &\, \forall v,\\
 v \sim w \implies & e_ve_w =0.
\end{align*}
Moreover, if we require, in addition, that $\cl A$ be finite dimensional, then we obtain $\xi_{q}(G)$, which is equal to the Mancinska-Roberson
projective rank $\xi_f(G).$ If we require, in addition, that $\cl A$ be abelian, then we obtain $\xi_{loc}(G)$, which
is equal to the fractional chromatic number $\chi_f(G).$
\end{thm}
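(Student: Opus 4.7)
The strategy is to apply Theorem~\ref{synctr} to translate the correlation-matrix definition of $\xi_x(G)$ (Definition~\ref{def:xiX}) into a tracial condition on a C*-algebra of the appropriate type. Since $m=2$, the completeness relation $e_{v,1}+e_{v,2}=1$ means $\cl A$ is generated by the single projection $e_v:=e_{v,1}$ per vertex, and Theorem~\ref{synctr} identifies $p_A(1|v)=\tau(e_v)$ and $p(1,1|v,w)=\tau(e_v e_w)$.

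The first technical step is to upgrade the synchronous orthogonality $p(1,1|v,w)=0$, which reads $\tau(e_v e_w)=0$, to the operator identity $e_v e_w=0$. This uses the trace calculation
\[
\tau\bigl((e_v e_w)^*(e_v e_w)\bigr)=\tau(e_w e_v e_w)=\tau(e_v e_w^2)=\tau(e_v e_w)=0,
\]
combined with the fact that the extended trace on the enveloping von Neumann algebra $\pi_\tau(\cl A)''$ is faithful. Replacing $\cl A$ by the C*-algebra generated by $\{\pi_\tau(e_v)\}$ inside this von Neumann algebra gives the required operator orthogonality while preserving the tracial values.

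With this in hand, the forward direction of the extremal equality is immediate: any correlation in $C^s_{qc}(n,2)$ satisfying Definition~\ref{def:xiX} with parameter $t$ yields, via Theorem~\ref{synctr} and the step above, a triple $(\cl A,\tau,\{e_v\})$ with $\tau(e_v)=t^{-1}$ and $e_v e_w=0$ for $v\sim w$. For the reverse direction, given such a triple with only $\tau(e_v)\ge\lambda$, we must construct a correlation with $p_A(1|v)$ exactly equal to a common value approaching $\lambda$. The plan is to pass to the enveloping von Neumann algebra and choose subprojections $f_v\le e_v$ with $\tau(f_v)=\lambda$; orthogonality is automatic since $f_v f_w\le e_v e_w=0$. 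In the $qc$ setting one may tensor with a diffuse abelian algebra to ensure such subprojections exist; in the finite-dimensional $q$ case the absence of exact subprojections of arbitrary prescribed trace is the main technical obstacle, handled by approximating $\lambda$ by rationals and enlarging $\cl A$ by tensoring with a suitable $M_N(\bb C)$.

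The cases $x=q$ and $x=loc$ then reduce to identifying the corresponding C*-algebra classes. Embedding a finite-dimensional tracial C*-algebra with the prescribed projections into $M_d(\bb C)$ via the left regular representation realizes $(\cl A,\tau,\{e_v\})$ as a $d/r$-projective representation of $G$ (with $r/d=\lambda$) in the sense of Mancinska--Roberson, yielding $\xi_q(G)=\xi_f(G)$. For abelian $\cl A=C(X)$, $\tau$ corresponds to a probability measure $\mu$ and each $e_v$ to the indicator of some $S_v\subseteq X$; the orthogonality becomes $S_v\cap S_w=\emptyset$ for adjacent $v,w$, and the bound $\mu(S_v)\ge\lambda$ is precisely the linear-programming formulation of the fractional chromatic number $\chi_f(G)$.
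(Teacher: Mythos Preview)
The paper does not supply a proof of Theorem~\ref{xichar}; it is stated as a summary of results from \cite{PSSTW} (specifically Definition~5.9, Proposition~5.10, Theorem~6.8 and Theorem~6.11 there), so there is no in-paper argument to compare against. Your outline is a reasonable reconstruction of how such a proof would go, and the core mechanism---using Theorem~\ref{synctr} with $m=2$, the identity $\tau((e_ve_w)^*(e_ve_w))=\tau(e_ve_w)$ to upgrade tracial orthogonality to operator orthogonality via the GNS representation, and a subprojection argument to pass from $\tau(e_v)\ge\lambda$ to an exact marginal---is sound and is essentially the argument given in \cite{PSSTW}.

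Two points deserve more care if you want a complete proof rather than a plan. First, in the $qc$ reverse direction you say ``tensor with a diffuse abelian algebra'' to obtain subprojections of prescribed trace; make explicit that this keeps you in the class of unital C$^*$-algebras with tracial states and that the resulting correlation indeed lies in $C^s_{qc}(n,2)$ via the general (not finite-dimensional) clause of Theorem~\ref{synctr}. Second, the identification $\xi_q(G)=\xi_f(G)$ is more delicate than your sketch indicates: a finite-dimensional tracial C$^*$-algebra is a direct sum $\bigoplus_i M_{n_i}$ with trace $\sum_i\alpha_i\tr_{n_i}$, and the ``left regular representation'' does not by itself produce projections of equal rank in a single matrix algebra, which is what a $d/r$ projective representation requires. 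One needs either to approximate the weights $\alpha_i$ by rationals and take a suitable direct sum of copies of the simple summands, or to argue (as in the proof of Proposition~4.3 here) that the supremum over finite-dimensional tracial states is already attained on traces of the form $\tr_k$ on a single matrix algebra. Your parenthetical about tensoring with $M_N(\mathbb C)$ is pointing in the right direction, but the reduction to a single full matrix algebra with normalized trace is the step that actually matches the Man\v{c}inska--Roberson definition.
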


In the next section, we prove that if Connes' embedding is true, then $\xi_{qc}(G) = \xi_f(G)$ for every graph $G.$

\section{Microstates, correlation matrices and Connes' embedding conjecture}
\label{sec:CEC}

Connes' embedding conjecture is one of the most important outstanding problems in operator algebra theory.
Connes asked~\cite{C76} whether every II$_1$-factor having separable predual is embeddable in an ultrapower $R^\omega$
of the hyperfinite II$_1$-factor $R$.
This is, in essence, a question about approximation (in terms of moments) of elements of II$_1$ factors by matrices.
To use a term introduced by Voiculescu, if $(\Mcal,\tau)$ is a tracial von Neumann algebra
(namely, a von Neumann algebra $\Mcal$ with normal, faithful tracial state $\tau$)
and if $x_1,\ldots,x_n$ are self-adjoint elements of $\Mcal$,
we say that the tuple $(x_1,\ldots,x_n)$ {\em has matricial microstates} if for every $\eps>0$ and every
integer $N\ge1$, there is $k$ and there are $k\times k$ self-adjoint matrices $A_1,\ldots,A_n$ such that for all $p\le N$
and every $i_1,\ldots,i_p\in\{1,\ldots,n\}$, we have
\[
\big|\tr_k(A_{i_1}\cdots A_{i_p})-\tau(x_{i_1}\cdots x_{i_p})\big|<\eps.
\]
The set of matrices, $A_1, \ldots, A_n$ is called a $(N, \epsilon)$-matricial microstate for $x_1, \ldots, x_n.$

The following result is well known and follows quite easily from the definition of $R^\omega$; see, for example, \cite{V02}, p.\ 264, Remark~(d).
\begin{thm}\label{thm:embeddable}
For a countably generated, tracial von Neumann algebra $(\Mcal,\tau)$, the following are equivalent:
\begin{enumerate}[(i)]
\item $\Mcal$ is embeddable in $R^\omega$
\item all finite families of self-adjoint elements of $\Mcal$ have matricial microstates.
\end{enumerate}
Furthermore, assuming $\Mcal$ is finitely generated, the above conditions are also equivalent to the following condition:
\begin{enumerate}[(i)]
\setcounter{enumi}{2}
\item some finite generating set of $\Mcal$ (consisting of self-adjoint elements) has matricial microstates.
\end{enumerate}
\end{thm}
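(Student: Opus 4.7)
The plan is to use the standard identification of $R^\omega$ as a superalgebra of an ultraproduct of matrix algebras, together with the hyperfiniteness of $R$ in the form $R=\overline{\bigcup_k M_{2^k}(\Cpx)}^{\mathrm{SOT}}$. For (i)$\Rightarrow$(ii), suppose $\Mcal\subseteq R^\omega$ and let $x_1,\dots,x_n\in\Mcal$ be self-adjoint. Lift each $x_i$ to a bounded representing sequence $(x_i^{(m)})_m\in\ell^\infty(R)$ of self-adjoints, and then use hyperfiniteness to approximate $x_i^{(m)}$ in the $2$-norm by a self-adjoint matrix $A_i^{(m)}\in M_{k_m}(\Cpx)\subseteq R$ with a uniform operator-norm bound. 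A standard telescoping argument then shows that the $\omega$-limit of $\tr_{k_m}(A_{i_1}^{(m)}\cdots A_{i_p}^{(m)})$ equals $\tau(x_{i_1}\cdots x_{i_p})$, so for $\omega$-almost every $m$ the tuple $(A_1^{(m)},\dots,A_n^{(m)})$ furnishes an $(N,\eps)$-matricial microstate.

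For (ii)$\Rightarrow$(i), fix a countable self-adjoint generating set $\{z_j\}_{j\ge 1}$ of $\Mcal$. For each $n$, invoke (ii) to produce self-adjoint matrices $A_1^{(n)},\dots,A_n^{(n)}\in M_{k_n}(\Cpx)$ that form an $(n,1/n)$-microstate for $(z_1,\dots,z_n)$; set $A_j^{(n)}=0$ for $j>n$ and embed each $M_{k_n}(\Cpx)$ trace-preservingly into $R$. Define $\pi(z_j):=[(A_j^{(n)})_n]_\omega\in R^\omega$. The microstate property ensures that $\pi$ extends to a trace-preserving $*$-homomorphism on the unital $*$-algebra $\cl A$ generated by $\{z_j\}$: indeed, if a $*$-polynomial $p$ satisfies $p(z_1,\dots,z_k)=0$, then $\tau((p^*p)(z))=0$, and hence $\tau_{R^\omega}((p^*p)(\pi(z)))=0$ by approximation of moments, forcing $p(\pi(z))=0$ in $R^\omega$. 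Being a $\|\cdot\|_2$-isometry on $\cl A$, the map $\pi$ extends to a trace-preserving normal embedding $\Mcal\hookrightarrow R^\omega$.

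For (iii)$\Rightarrow$(ii) under the finite-generation hypothesis, let $\{y_1,\dots,y_k\}$ be a self-adjoint generating set with microstates, and let $x_1,\dots,x_n\in\Mcal$, $N\ge 1$, and $\eps>0$ be given. By Kaplansky density, choose self-adjoint $*$-polynomials $p_i$ in $y_1,\dots,y_k$ of uniformly bounded operator norm with $\|x_i-p_i(y)\|_2$ small enough that a Cauchy--Schwarz telescoping estimate makes $N$th-order moments of the $x_i$ match those of the $p_i(y)$ to within $\eps/2$. If $(B_1,\dots,B_k)$ is a microstate for $(y_1,\dots,y_k)$ of sufficiently high order and accuracy, then $A_i:=p_i(B_1,\dots,B_k)$ is an $(N,\eps)$-microstate for $(x_1,\dots,x_n)$; the reverse implication (ii)$\Rightarrow$(iii) is trivial.

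I expect the main obstacle to lie in the extension step in (ii)$\Rightarrow$(i): one must verify that the $*$-homomorphism $\pi$ initially defined on polynomials in the $z_j$ genuinely extends to a normal embedding of $\Mcal$, rather than only to a $*$-algebra map. This follows from $\pi$ being a $\|\cdot\|_2$-isometry together with $\|\cdot\|_2$-density of $\cl A$ in $\Mcal$, which lift $\pi$ to a Hilbert-space isometry between the standard forms that restricts to the desired $W^*$-embedding.
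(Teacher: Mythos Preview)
The paper does not supply a proof of this theorem; it states that the result is well known, follows easily from the definition of $R^\omega$, and refers the reader to Voiculescu's survey \cite{V02}, p.~264, Remark~(d). Your argument is correct and is exactly the standard proof underlying that citation: (i)$\Rightarrow$(ii) unwinds the ultraproduct construction together with the matricial approximation available inside $R$, (ii)$\Rightarrow$(i) packages microstates of increasing accuracy into representing sequences in $R^\omega$, and (iii)$\Rightarrow$(ii) pushes microstates for generators through polynomial approximations. The concern you raise about extending $\pi$ from the $*$-algebra $\cl A$ to all of $\Mcal$ is resolved just as you indicate: trace preservation makes $\pi$ a $\|\cdot\|_2$-isometry on $\cl A$, and together with Kaplansky density and $\|\cdot\|_2$-completeness of norm-bounded balls in a finite von Neumann algebra this upgrades to a normal, trace-preserving embedding of $\Mcal$ into $R^\omega$.
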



\begin{prop}\label{closureconnes}
If $C_{qc}^s(n,m)$ equals the closure of $C_q^s(n,m)$ for all $n,m,$ then Connes' embedding conjecture is true.
\end{prop}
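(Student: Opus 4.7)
The plan is to use Theorem~\ref{thm:embeddable} to translate Connes' embedding conjecture into the language of matricial microstates, and then to build such microstates from the density hypothesis via Theorem~\ref{synctr}. Specifically, it suffices to show that every finitely generated tracial von Neumann algebra $(\Mcal,\tau)$ has some finite generating set of self-adjoint elements with matricial microstates.

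First I would reduce to a generating set consisting of projections arranged in finitely many PVMs. Given self-adjoint generators $x_1,\ldots,x_n$ with $\|x_i\|\le 1$, approximate each $x_i$ in operator norm by a step function $\tilde x_i=\sum_{a=1}^m t_{i,a}e_{i,a}$, where $\{e_{i,a}\}_{a=1}^m$ is the PVM of spectral projections of $x_i$ for a $1/m$-fine partition of $[-1,1]$. Matricial microstates for the full family $\{e_{i,a}\}_{i,a}$ translate, via the same linear combinations, into microstates for the $\tilde x_i$; a diagonal argument as $m\to\infty$ then produces microstates for the $x_i$ themselves. Thus the problem reduces to producing microstates for arbitrarily large families consisting of $n$ PVMs of size $m$ in $\Mcal$.

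For such a family of projections, Theorem~\ref{synctr} puts the correlation $p(a,b\,|\,i,j):=\tau(e_{i,a}e_{j,b})$ into $C^s_{qc}(n,m)$, so the density hypothesis supplies approximating correlations in $C^s_q(n,m)$. Each approximant arises (again by Theorem~\ref{synctr}) from a finite-dimensional C*-algebra with PVMs and a tracial state; decomposing the algebra as a direct sum of matrix blocks and approximating the tracial weights by rationals with a common denominator, one can embed each approximant into a single full matrix algebra $M_{N_k}(\bb C)$ carrying its normalized trace. This yields matrix PVMs $\{E_{i,a}^{(k)}\}$ with $\tr_{N_k}(E_{i,a}^{(k)}E_{j,b}^{(k)})\to\tau(e_{i,a}e_{j,b})$.

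The main obstacle is the gap between length-$2$ moment convergence and the all-length moment convergence required by matricial microstates. To bridge this I would enlarge the PVM family inside $\Mcal$ by adjoining, for each word $w=(i_1,a_1,\ldots,i_\ell,a_\ell)$ up to a prescribed length $N$, finitely many spectral projections of the positive self-adjoint operator $(e_{i_1,a_1}\cdots e_{i_\ell,a_\ell})^*(e_{i_1,a_1}\cdots e_{i_\ell,a_\ell})$; such projections carry information about $\|e_w\|_2^2$ and, via polarization of the inner product $\tau(x^*y)$ on $L^2(\Mcal,\tau)$, about all complex moments of length up to $N$. Invoking the density hypothesis at the correspondingly larger parameters $(n',m')$, then combining these length-$2$ estimates across increasing $N$ by a diagonal argument, should recover matricial microstates for the original generators. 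Making this enlargement compatible with the a priori algebra-agnostic matrix approximants furnished by the hypothesis is the technical heart of the argument, and is the step I expect to be hardest.
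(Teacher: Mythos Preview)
Your reduction to PVMs and invocation of Theorem~\ref{synctr} are fine, and you correctly isolate the real difficulty: the hypothesis only controls length--$2$ moments $\tau(e_{i,a}e_{j,b})$, whereas matricial microstates in the sense of Theorem~\ref{thm:embeddable} require approximation of moments of \emph{all} lengths. The enlargement trick you propose, however, does not close this gap. When you adjoin auxiliary projections $f_{w,c}$ (spectral projections of $|e_w|^2$ for words $w$) and apply the hypothesis at the larger parameters $(n',m')$, the matrix approximant produces projections $E_{i,a}$ and $F_{w,c}$ whose \emph{pairwise} traces are close to the target; but nothing forces $F_{w,c}$ to be the spectral projection of the corresponding product of the $E_{i,a}$'s. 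Without that algebraic coupling, the length--$2$ data on the enlarged family does not translate into length--$\ell$ data on the original one. You flag this compatibility issue yourself as ``the step I expect to be hardest,'' and indeed as written there is no mechanism to enforce it.

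The paper sidesteps this obstacle entirely by invoking Kirchberg's reformulation (see~\cite{Ki93}, \cite{DJ11}): Connes' embedding conjecture is equivalent to the statement that for any unitaries $u_1,\ldots,u_n$ in a II$_1$-factor, the \emph{second-order} moments $\tau(u_k u_\ell^*)$ can be approximated by $\tr_p(U_k U_\ell^*)$ for unitary matrices $U_k$. Once one has this, the proof is short: discretize each $u_k$ to a unitary $\tilde u_k=\sum_j \omega^j e_{k,j}$ with finite spectrum, note that $(\tau(e_{k,i}e_{\ell,j}))\in C^s_{qc}(n,m)$, use the hypothesis to approximate by matrix PVMs $E_{k,j}$, and set $U_k=\sum_j \omega^j E_{k,j}$. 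The point is that Kirchberg's theorem has already done the hard work of collapsing ``all moments'' to ``order-two moments of unitaries,'' which is precisely what synchronous correlations encode. Your proposal is, in effect, trying to reprove that collapse from scratch, and the sketch given does not do so.
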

\begin{proof}
By a result of Kirchberg~\cite{Ki93},
the truth of Connes' embedding conjecture is equivalent
to the ``unitary moments'' assertion,
which states that whenever $n\in\Nats$ and $u_1,\ldots,u_n$ are unitary elements of a II$_1$-factor $\Mcal$, (with tracial state denoted $\tau$)
and whenever $\eps>0$, there is $p\in\Nats$ and there are unitary matrices $U_1,\ldots,U_n\in M_p(\Cpx)$ such that
$|\tau(u_ku_\ell^*)-\tr_p(U_kU_\ell^*)|<\eps$ for all $k,\ell\in\{1,\ldots,n\}$.
(See~\cite{DJ11} for discussion of this slight modification of Kirchberg's formulation.)
We will observe that the ``unitary moments'' assertion follows if we assume $\overline{C_q^s(n,m)}=C_{qc}^s(n,m)$.

Let $u_1,\ldots,u_n$ be as above and let $\eps>0$.
Take an integer $m>6\pi/\eps$.
Let
\[
\ut_k=\sum_{j=1}^m\omega^je_{k,j}
\]
where $\omega=\exp(\frac{2\pi\sqrt{-1}}m)$ and
where $e_{k,j}$ is the spectral projection of the unitary $u_k$ for the arc
\[
\bigg\{\exp(2\pi t\sqrt{-1})\;\bigg|\;\frac{j-1}m\le t<\frac jm\bigg\}.
\]
Then
$\|\ut_k-u_k\|\le|1-\omega|<\eps/3$.
By hypothesis, there exist $p\in\Nats$ and projections $E_{k,j}$ in $M_p(\Cpx)$ such that
\[
\sum_{j=1}^mE_{k,j}=1\qquad(1\le k\le n)
\]
and
\[
\big|\tr_p(E_{k,i}E_{\ell,j})-\tau(e_{k,i}e_{\ell.j})\big|<\frac\eps{3m^2},\qquad(1\le k,\ell\le n,\;1\le i,j\le m).
\]
Let
\[
U_k=\sum_{j=1}^m\omega^jE_{k,j}.
\]
Then $U_k\in M_p(\Cpx)$ is unitary and $|\tr_p(U_kU_\ell^*)-\tau(\ut_k\ut_\ell^*)|<\eps/3$ for all $1\le k,\ell\le n$.
This implies $|\tr_p(U_kU_\ell^*)-\tau(u_ku_\ell^*)|<\eps$.
\end{proof}




As usual, we will denote by $\|\cdot\|_2$ the $2$-norm on $M_k(\bb C)$, given by $\|x\|_2=\tr_k(x^*x)^{1/2}$.

\begin{lemma}\label{lem:hdelta}
Let $\tau$ be a tracial state on a finite dimensional abelian C$^*$-algebra $A=\bb C^m$ and let $h\in A$
be a self-adjoint generator of $A$.
Let $\delta>0$.
Then there exists a positive integer $N$ and $\eps>0$ such that for all sufficiently large positive integers $k$,
if $a\in M_k(\bb C)$
is an $(N,\eps)$-microstate for $h$, then there is a unital $*$--representation $\pi:A\to M_k(\bb C)$
so that $\|\pi(h)-a\|_2<\delta$.
\end{lemma}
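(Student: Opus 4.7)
The plan is to define $\pi$ via spectral projections of the microstate $a$, then estimate the resulting error via Weierstrass approximation on a bounded interval and a high-moment Chebyshev bound for the tail. Since $h$ is a self-adjoint generator of $A = \bb C^m$, it has $m$ distinct real eigenvalues $\lambda_1 < \cdots < \lambda_m$ with minimal projections $e_1,\ldots,e_m$, so $h = \sum_i \lambda_i e_i$. Set $\Lambda = \max_i |\lambda_i|$, partition $\bb R$ into intervals $J_1,\ldots,J_m$ with $\lambda_i \in J_i$ and cuts at the midpoints $(\lambda_i + \lambda_{i+1})/2$, and define $\phi : \bb R \to [0,\infty)$ by $\phi(x) = (x - \lambda_i)^2$ for $x \in J_i$. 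This $\phi$ is continuous (the two one-sided formulas agree at each midpoint) and satisfies $\phi(x) \le 2(x^2 + \Lambda^2)$. Given a self-adjoint $a \in M_k(\bb C)$, let $P_i$ be the spectral projection of $a$ for $J_i$ and set $\pi(e_i) = P_i$; since the $J_i$ partition $\bb R$, the $P_i$ are mutually orthogonal projections summing to $I_k$, so $\pi : A \to M_k(\bb C)$ is a unital $*$-representation with $\pi(h) = \sum_i \lambda_i P_i$. Writing $\pi(h) = g(a)$ for the step function $g(x) = \lambda_i$ on $J_i$, functional calculus yields the key identity
\[
\|a - \pi(h)\|_2^2 \;=\; \tr_k\bigl(\phi(a)\bigr) \;=\; \int \phi \, d\mu_a,
\]
where $\mu_a$ is the empirical spectral distribution of $a$; the analogous integral against $\mu_h = \sum_i \tau(e_i)\delta_{\lambda_i}$ vanishes because $\phi = 0$ on $\mathrm{supp}\,\mu_h$.

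It remains to choose $N$ and $\eps$ so that the microstate condition forces $\int \phi \, d\mu_a < \delta^2$. I would fix $R > \Lambda$ (say $R = 2\Lambda$; the case $\Lambda = 0$ forces $m = 1$ and is trivial) and split $\int \phi \, d\mu_a$ at $\pm R$. For the bulk, apply Weierstrass approximation on $[-R, R]$ to obtain a polynomial $P$ of degree $d$ with $\sup_{[-R, R]} |P - \phi| < \delta^2/10$. Since $\mathrm{supp}\,\mu_h \subset [-R,R]$, we get $|\int P \, d\mu_h| < \delta^2/10$, and for $N \ge d$ the $(N,\eps)$-microstate condition gives $|\int P \, d\mu_a - \int P \, d\mu_h| \le \eps \sum_j |c_j|$, where the $c_j$ are the coefficients of $P$. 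The residual tail contributions are handled via $|P(x)| + \phi(x) \le C(1 + x^{2d})$ together with a Chebyshev bound against the $2N$th moment:
\[
\int_{|x| > R} \bigl(|P(x)| + \phi(x)\bigr)\, d\mu_a(x) \;\le\; \frac{C'}{R^{2N - 2d}} \int x^{2N}\, d\mu_a \;\le\; \frac{C'(\Lambda^{2N} + \eps)}{R^{2N - 2d}}.
\]
Because $R > \Lambda$, the leading term decays like $(\Lambda/R)^{2N - 2d}$, which can be forced below $\delta^2/10$ by choosing $N$ large after $R$ and $P$ are fixed. Finally, $\eps$ is shrunk (in terms of $P$, $R$, $N$) to absorb all remaining errors, giving $\int \phi \, d\mu_a < \delta^2$.

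The main obstacle is the tail of $\mu_a$: a priori the microstate could have outlier eigenvalues far from $\mathrm{supp}\,\mu_h$, and $\phi$ grows quadratically, so matching a bounded number of moments does not on its own control $\int \phi\, d\mu_a$. The argument turns on the freedom to make $N$ as large as we wish and on ordering the parameter choices correctly (first $R$, then $d = \deg P$, then $N \gg d$, then $\eps$): the $(N,\eps)$-microstate condition forces the high even moment $\int x^{2N} d\mu_a$ to be close to $\int x^{2N} d\mu_h \le \Lambda^{2N}$, and with $R > \Lambda$ this crushes the tail.
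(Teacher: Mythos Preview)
Your argument is correct, modulo one bookkeeping slip: the inequality $\int x^{2N}\,d\mu_a \le \Lambda^{2N}+\eps$ requires the microstate condition to control moments up to degree $2N$, whereas an $(N,\eps)$-microstate only controls degrees $\le N$. Since you are free to choose the microstate parameter as large as you wish, simply run the Chebyshev step with an even moment of order at most $N$ (or, equivalently, relabel and demand a $(2N,\eps)$-microstate); your ordering of choices ---first $R$, then $P$ of degree $d$, then the microstate parameter large relative to $d$, then $\eps$ small--- is exactly right and the estimates go through.

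The paper's proof takes a quite different and much shorter route. It invokes Lemma~4.3 of Voiculescu \cite{V94}, which says that for suitable $(N,\eps)$ any two $(N,\eps)$-microstates for $h$ in $M_k(\Cpx)$ are within $\delta$ in $\|\cdot\|_2$ after conjugation by a unitary. One then observes that for all sufficiently large $k$ there is a unital $*$-representation $\pi_0:A\to M_k(\Cpx)$ with $\pi_0(h)$ itself an $(N,\eps)$-microstate for $h$ (approximate the weights $\tau(e_i)$ by rationals with denominator $k$), and takes $\pi=u\pi_0(\cdot)u^*$. Your approach is more elementary and self-contained: it avoids the external citation and in fact yields the conclusion for \emph{every} $k$, not only sufficiently large ones, since your $\pi$ is built directly from the spectral projections of the given microstate $a$. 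The paper's approach, by contrast, is essentially a two-line reduction once Voiculescu's approximate-unitary-equivalence lemma is in hand, and it foregrounds a general principle about microstates that is useful well beyond this particular lemma.
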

\begin{proof}
By Lemma~4.3 of~\cite{V94}, there is an integer $N>0$ and there is $\eps>0$ such that whenever
$a,b\in M_k(\bb C)$ are $(N,\eps)$-microstates for $h$, then there is a unitary $u\in M_k(\Cpx)$ such that
$\|a-ubu^*\|_2<\delta$.
For all $k$ sufficiently large, there is a unital $*$-representation $\pi:A\to M_k(\Cpx)$
so that $\pi(h)$ is an $(N,\eps)$-microstate for $h$.
Be Voiculescu's result, we can find unitary $u$ so that $\|a-u\pi(h)u^*\|<\delta$,
and replacing $\pi$ by $u\pi(\cdot)u^*$,
we are done.
\end{proof}

Let $\bb F(n,m)$ denote the free product of $n$ copies of the cyclic group of order $m,$ $\bb Z_m$ and let $\CFnm$ be the full group C$^*$-algebra.
Then $\CFnm$ is the universal unital free product C$^*$-algebra
\begin{equation}\label{eq:A}
\CFnm=*_1^n\Cpx^m
\end{equation}
of $n$ copies of the $m$--dimensional abelian C$^*$-algebra $\Cpx^m$.

\begin{defn}
Fix a set $H$ of self-adjoint elements of $\Cpx^m$, each of norm $\le1$, that generates $\Cpx^m$ as a unital algebra.
For every $j\in\{1,\ldots,n\}$,
let $H_j\subset\CFnm$ be the copy of $H$ in the $j$-th copy of $\Cpx^m$ in $\CFnm$, so that $\CFnm$ is generated by
$H_1\cup\cdots\cup H_n$.
For $\tau$ and $\sigma$ tracial states on $\CFnm$ and for $N\in\Nats$ and $\eps>0$, we will say that
{\em $\sigma$ approximates $\tau$ with tolerance $(N,\eps)$ for the generating set $H$}
if for every $p\in\{1,\ldots,N\}$ and $x_1,\dots,x_p\in H_1\cup\cdots\cup H_n$, we have
\[
\left|\tau(x_1\cdots x_p)-\sigma(x_1\cdots x_p)\right|<\eps.
\]
\end{defn}

\begin{remark}\label{rmk:HH'}
If $H'$ is another generating set of $\Cpx^m$ consisting of self-adjoint elements, then each element of $H'$ is a polynomial in elements of $H$.
Thus, 
for every $N'\in\Nats$ and $\eps'>0$, there are $N\in\Nats$ and $\eps>0$ such that if
$\sigma$ approximates $\tau$ with tolerance $(N,\eps)$ for the generating set $H$,
then
$\sigma$ approximates $\tau$ with tolerance $(N',\eps')$ for the generating set $H'$.
\end{remark}

\begin{prop}\label{prop:tauh}
Suppose Connes' embedding conjecture is true.
Let $H$ be a finite generating set for $\Cpx^m$,
consisting of self-adjoint elements.
Let $\tau$ be a tracial state on $\CFnm$.
Let $N\in\Nats$ and $\eps>0$.
Then there exists $k\in\Nats$ and a unital $*$--homomorphism $\pi:\CFnm\to M_k(\Cpx)$
so that the trace $\tr_k\circ\pi$ approximates $\tau$ with tolerance $(N,\eps)$
for the generating set $H$.
\end{prop}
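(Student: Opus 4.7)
The strategy is to combine the microstates reformulation of Connes' embedding conjecture with Lemma~\ref{lem:hdelta} and the universal property of the free product $\CFnm=*_1^n\Cpx^m$. By Remark~\ref{rmk:HH'}, it suffices to treat a single convenient choice of generating set, so take $H=\{h\}$ where $h$ is a self-adjoint generator of $\Cpx^m$ of norm at most $1$, and write $h_j\in\CFnm$ for its image in the $j$-th free factor. The plan is: pass to the GNS von Neumann algebra, extract a matricial microstate for $(h_1,\ldots,h_n)$ there, straighten each coordinate into a genuine $*$-representation using Lemma~\ref{lem:hdelta}, amalgamate by freeness, and compare moments.

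Form the GNS tracial von Neumann algebra $(\Mcal,\bar\tau)$ of $(\CFnm,\tau)$; this is countably generated and satisfies $\bar\tau(h_{i_1}\cdots h_{i_p})=\tau(h_{i_1}\cdots h_{i_p})$ for every monomial. By Connes' embedding conjecture and Theorem~\ref{thm:embeddable}, the self-adjoint family $(h_1,\ldots,h_n)\subset\Mcal$ admits matricial microstates of arbitrary tolerance. For each $j$ let $\tau_j$ denote the tracial state on $\Cpx^m$ obtained by restricting $\tau$ to the $j$-th copy, and apply Lemma~\ref{lem:hdelta} with trace $\tau_j$ and tolerance $\delta>0$ (to be chosen) to obtain integers $N_j$ and $\eps_j>0$. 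Set $N_0=\max(2N,N_1,\ldots,N_n)$ and $\eps_0=\min(1,\eps_1,\ldots,\eps_n)$ (further constrained below). Pick $k$ sufficiently large and obtain self-adjoint $a_1,\ldots,a_n\in M_k(\Cpx)$ forming an $(N_0,\eps_0)$-microstate for $(h_1,\ldots,h_n)$. Since $\bar\tau(h_j^p)=\tau_j(h^p)$, each $a_j$ is in particular an $(N_j,\eps_j)$-microstate for $h$ with respect to $\tau_j$, so Lemma~\ref{lem:hdelta} yields a unital $*$-representation $\pi_j:\Cpx^m\to M_k(\Cpx)$ with $\|\pi_j(h)-a_j\|_2<\delta$. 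By the universal property of the free product these assemble into a unique unital $*$-homomorphism $\pi:\CFnm\to M_k(\Cpx)$.

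To verify that $|\tau(h_{i_1}\cdots h_{i_p})-\tr_k(\pi(h_{i_1})\cdots\pi(h_{i_p}))|<\eps$ for $p\le N$, insert $\tr_k(a_{i_1}\cdots a_{i_p})$ via the triangle inequality. The gap to $\tau(h_{i_1}\cdots h_{i_p})$ is at most $\eps_0$ by the microstate property (using $p\le N_0$). The gap to $\tr_k(\pi(h_{i_1})\cdots\pi(h_{i_p}))$ is handled by telescoping $a_{i_1}\cdots a_{i_p}-\pi(h_{i_1})\cdots\pi(h_{i_p})$ into $p$ differences of the form (leading product of $\pi(h_{i_\ell})$'s)$\cdot(a_{i_s}-\pi(h_{i_s}))\cdot$(trailing product of $a_{i_\ell}$'s) and applying Cauchy--Schwarz to each trace. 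The $\pi$-images have operator norm at most $1$, the middle factor contributes at most $\delta$ in $\|\cdot\|_2$, and the $\|\cdot\|_2$-norm of any trailing product of $a_{i_\ell}$'s is controlled by the microstate approximation of the corresponding even moment of the $h_j$ in $\Mcal$, which is bounded by $\sqrt{1+\eps_0}$ since $\|h_j\|\le 1$ in $\Mcal$. Each telescoped term therefore contributes at most $\delta\sqrt{1+\eps_0}$, for a total of at most $N\delta\sqrt{2}$; choosing $\delta<\eps/(4N)$ and $\eps_0<\eps/2$ finishes the estimate.

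The main obstacle, and the reason for the bookkeeping above, is that matricial microstates of bounded self-adjoint elements of $\Mcal$ are not guaranteed to have bounded operator norm in $M_k(\Cpx)$, so naive operator-norm telescoping estimates involving products of the $a_j$'s cannot be used. Routing the Cauchy--Schwarz estimate so that only $\|\cdot\|_2$-norms of subproducts of the $a_{i_\ell}$'s appear allows the microstate property to furnish the required bounds, via the corresponding (bounded) even moments of the $h_j$'s in $\Mcal$.
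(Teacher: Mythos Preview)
Your proof is correct and follows essentially the same approach as the paper's: reduce to a singleton generator via Remark~\ref{rmk:HH'}, pull a joint microstate for $(h_1,\ldots,h_n)$ from Connes' embedding, straighten each coordinate into a genuine representation $\pi_j$ via Lemma~\ref{lem:hdelta}, and compare moments by telescoping. The only substantive difference is that the paper simply asserts ``by the choice of $\delta$'' that $(\pi_1(h_1),\ldots,\pi_n(h_n))$ is an $(N,\eps)$-microstate, implicitly treating the telescoping as a routine operator-norm estimate, whereas you take care to route the Cauchy--Schwarz bound so that only $\|\cdot\|_2$-norms of subproducts of the $a_{i_\ell}$'s appear (and hence doubling $N$ to $2N$ in the microstate tolerance to control those even moments); this is a genuine technical refinement, since raw microstates need not have bounded operator norm unless one invokes the standard spectral-cutoff trick.
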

\begin{proof}
In light of Remark~\ref{rmk:HH'}, we may without loss of generality assume $H$ is a singleton set, $H=\{h\}$,
and we write $H_j=\{h_j\}$.
Take $0<\delta<\eps/(2N)$.
Let $N_j\in\Nats$ and $\eps_j>0$ be obtained from Lemma~\ref{lem:hdelta}, so that for every
$(N_j,\eps_j)$-microstate $a_j\in M_k(\Cpx)$ for $h_j$,
there is a unital $*$-homomorphism $\pi_j:\Cpx^m\to M_k(\Cpx)$
with 
\begin{equation}\label{eq:piha}
\|\pi_j(h_j)-a_j\|_2<\delta.
\end{equation}
Let $N'=\max(N,N_1,\ldots,N_n)$ and $\eps'=\min(\eps/2,\eps_1,\ldots,\eps_n)$.
By the assumption that Connes' embedding conjecture is true, there exists
an $(N',\eps')$-microstate $(a_1,\ldots,a_n)$ for $(h_1,\ldots,h_n)$.
By the choice of $(N',\eps')$, there exist $*$-homomorphisms $\pi_j:\Cpx^m\to M_k(\Cpx)$ as above,
so that~\eqref{eq:piha} holds.
By the choice of $\delta$, it follows that $(\pi_1(h_1),\ldots,\pi_n(h_n))$ is an $(N,\eps)$-microstate for $(h_1,\ldots,h_n)$.
We have the universal free product $*$-homomorphism $\pi=*_1^n\pi_j:\CFnm\to M_k(\Cpx)$
and the previous statement implies that $\tr_k\circ\pi$ approximates $\tau$ with tolerance $(N,\eps)$
for the generating set $H$.
\end{proof}

\begin{thm}\label{connes=closure} Connes' embedding conjecture is true if and only if $C^s_{qc}(n,m)$ is the closure of $C^s_q(n,m)$ for all $n,m.$
\end{thm}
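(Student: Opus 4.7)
The forward direction (closure equals $C^s_{qc}$ implies Connes' embedding conjecture) is already established by Proposition~\ref{closureconnes}, so the plan is to prove the converse: assuming Connes' embedding conjecture, every element of $C^s_{qc}(n,m)$ can be approximated by elements of $C^s_q(n,m)$. The strategy is to convert a synchronous $qc$-correlation into a tracial state on $\CFnm = *_1^n\Cpx^m$, use Proposition~\ref{prop:tauh} to replace this trace by the normalized trace of a finite-dimensional representation with matching low-order moments, and then convert back into a synchronous $q$-correlation via Theorem~\ref{synctr}.

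In more detail, suppose $(p(a,b|x,y))\in C^s_{qc}(n,m)$. By Theorem~\ref{synctr}, there is a unital C*-algebra $\cl A$ generated by projections $\{e_{x,a}\}$ with $\sum_a e_{x,a}=1$ for each $x$, and a tracial state $\tau$ on $\cl A$ with $p(a,b|x,y)=\tau(e_{x,a}e_{y,b})$. By the universal property of $\CFnm=*_1^n\Cpx^m$ recorded in~\eqref{eq:A}, there is a surjective unital $*$-homomorphism $\rho:\CFnm\to\cl A$ carrying the $m$ minimal projections of the $x$-th free factor to $e_{x,1},\ldots,e_{x,m}$. Let $\et_{x,a}\in\CFnm$ denote these minimal projections and set $\tilde\tau=\tau\circ\rho$, a tracial state on $\CFnm$ satisfying $\tilde\tau(\et_{x,a}\et_{y,b})=p(a,b|x,y)$.

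Now fix the generating set $H=\{\et_{1,1},\ldots,\et_{1,m}\}$ of the first copy of $\Cpx^m$; each of its elements is self-adjoint of norm one, so $H$ satisfies the hypotheses of Proposition~\ref{prop:tauh}. Given $\eps>0$, apply that proposition with tolerance $(2,\eps)$ to obtain $k\in\Nats$ and a unital $*$-homomorphism $\pi:\CFnm\to M_k(\Cpx)$ such that
\[
\bigl|\tr_k\bigl(\pi(\et_{x,a})\pi(\et_{y,b})\bigr)-\tilde\tau(\et_{x,a}\et_{y,b})\bigr|<\eps
\]
for all $x,y,a,b$, since each product $\et_{x,a}\et_{y,b}$ is a length-two word in $H_1\cup\cdots\cup H_n$. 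The projections $\pi(\et_{x,a})\in M_k(\Cpx)$ form projective measurements $\sum_a\pi(\et_{x,a})=I$ for each $x$, and the finite-dimensional tracial state $\tr_k\circ\pi$ on $M_k(\Cpx)$ together with these projections produces, via the finite-dimensional part of Theorem~\ref{synctr}, a correlation
\[
p_\pi(a,b|x,y)=\tr_k\bigl(\pi(\et_{x,a})\pi(\et_{y,b})\bigr)\in C^s_q(n,m).
\]

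Thus $\|p_\pi-p\|_\infty<\eps$, and letting $\eps\to0$ shows $p\in\overline{C^s_q(n,m)}$, completing the proof. The only real work is packaged inside Proposition~\ref{prop:tauh}; the main conceptual step is recognizing that the unique trace on a finite-dimensional matrix algebra paired with projective measurements automatically yields a synchronous $q$-correlation via Theorem~\ref{synctr}, so no separate construction of shared entangled states is required. The potential subtlety is that different choices of generating sets for $\Cpx^m$ would require upgrading the tolerance $(2,\eps)$ to some $(N,\eps')$ via Remark~\ref{rmk:HH'}, but this is avoided by choosing $H$ to consist of the minimal projections, for which the correlations themselves are exactly the length-two moments.
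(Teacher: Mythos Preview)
Your argument is correct and follows essentially the same route as the paper: cite Proposition~\ref{closureconnes} for one direction, and for the converse pull the trace back to $\CFnm$, apply Proposition~\ref{prop:tauh} with the minimal projections as generating set and tolerance $(2,\eps)$, and read off a synchronous $q$-correlation from the resulting matrix representation via Theorem~\ref{synctr}. The only differences are expository: you spell out the universal-property step $\tilde\tau=\tau\circ\rho$ and explicitly invoke the finite-dimensional clause of Theorem~\ref{synctr}, whereas the paper absorbs both into a single sentence.
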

\begin{proof} Proposition~\ref{closureconnes} shows that equality of the closure implies that Connes' embedding conjecture is true. For the converse assume that Connes' embedding conjecture is true, and let $H= \{ e_1, \ldots, e_m \}$ be the coordinate projections for $\bb C^m$. Let $V$ be a set of cardinality $n,$ and for $v\in V$ let $H_v= \{ e_{v,1}, \ldots, e_{v,m} \}$ be a generating set for the $v$-th copy of $\bb C^m$ in $*_{v \in V} \bb C^m= \CFnm.$  

We know that the closure of $C^s_q(n,m)$ is a subset of $C^s_{qc}(n,m),$ so it is enough to show the reverse inclusion. 
Suppose that we are given $\big( p(i,j|v,w) \big) \in C^s_{qc}(n,m).$ By Theorem~\ref{synctr} there is a trace $\tau: \CFnm \to \bb C$ such that $p(i,j|v,w) = \tau(e_{v,i}e_{w,j}).$ Apply Proposition~\ref{prop:tauh} with $N=2$ to conclude that there is $k$ and a *-homomorphism $\pi: \CFnm \to M_k$ so that $\tr_k \circ \pi$ approximates $\tau$ 
with tolerance $(2, \epsilon)$ for $H.$
Hence,  \[ |\tr_k \circ \pi(e_{v,i}e_{w,j}) - p(i,j|v,w)| < \epsilon.\]
Let $E_{v,i} = \pi(e_{v,i}) \in M_k,$ so that these are projections and if we set
$p_{\epsilon}(i,j|v,w) = \tr_k(E_{v,i}E_{w,j}),$ then $\big( p_{\epsilon}(i,j|v,w) \big) \in C^s_q(n,m)$ and 
converges to $\big( p(i,j|v,w) \big)$ as $\epsilon \to 0.$ Hence, $C_{qc}^s(n,m)$ is contained in the closure of $C_q^s(n,m).$
\end{proof}

The above result characterizes the closure of $C^s_q(n,m)$, assuming that Connes' embedding conjecture is true. But can we say anything about the closure without assuming that the conjecture is true? In particular, we ask the following:

\begin{prob}[Synchronous Approximation Problem]\label{syncapprox} Is the closure of $C_q^s(n,c)$ equal to $C_{qa}^s(n,c)$ for all $n$ and $c$?
\end{prob}

If the answer to the above problem was affirmative, then it would give a new proof of Ozawa's result that Connes' embedding conjecture is true if and only if $C_{qc}(n,c)=C_{qa}(n,c)$ for all $n$ and $c.$ In fact, we would have that the following are equivalent:
\begin{enumerate}[(i)]
\item Connes' embedding conjecture is true,
\item $C^s_{qc}(n,c)=C^s_{qa}(n,c)$ for all $n,c,$
\item $C_{qc}(n,c)=C_{qa}(n,c)$ for all $n,c.$
\end{enumerate}

To see this, note that if the answer to Problem~\ref{syncapprox} is affirmative, then
the above result shows that (ii) implies (i).

The fact that (i) implies (iii) was proven in \cite{jnppsw, fritz2,
  fkpt_dg}. We sketch
the proof. By Kirchberg's result, if Connes' is true, then $\CFnc
\otimes_{\min} \CFnc = \CFnc \otimes_{\max} \CFnc$ for every $n,c$. The
matrices in $C_{qc}(n,c)$ are all given by $p(i,j|v,w) \phi(e_{v,i}
\otimes e_{w,j})$ for some state on $\CFnc \otimes_{\max} \CFnc.$ But
 $\phi$ is also a state on $\CFnc \otimes_{\min} \CFnc$ and all such
 states can be shown to be limits of states given by finite
 dimensional representations.  In fact, this was first explicitly
 shown in \cite{ws2008}.

It remains to show that (iii) implies (ii). But if the two sets are equal then their synchronous subsets are equal.


Now we consider a graph $G$ having vertex set $V$ consisting of $n$ vertices,
and without loops (so that every edge has two distinct vertices).
For $v,w\in V$, we will write $v\sim w$ when $v$ is connected to $w$ by an edge.
Let us fix $m\in\Nats$ and consider the C$^*$-algebra $\CFnm$ as in~\eqref{eq:A}, but written
\[
\CFnm=*_{v\in V}\Cpx^m
\]

Let $e_1,\ldots,e_m$ be the minimal  projections
in $\Cpx^m$ and for $v\in\Gamma_0$, let $e_{v,1},\ldots,e_{v,m}$ be the copies of these in the corresponding
generating copy of $\Cpx^m$ in $\CFnc$.
We will say that a tracial state $\tau$ on $\CFnm$ satisfies
\begin{itemize}
\item[$\bullet$]
the {\em orthogonality condition} if $\tau(e_{v,i}e_{w,i})=0$
whenever $v,w\in V$, $v\sim w$ and $i\in\{1,\ldots,m\}$
\item[$\bullet$]
the {\em weak orthogonality condition} if $\tau(e_{v,1}e_{w,1})=0$ whenever
$v,w\in V$ and $v\sim w$.
\end{itemize}
Note that the weak orthogonality condition depends on our choice of ordering of the projections;
thus, we fix such an ordering.
In practice, we will only be concerned with the weak orthogonality condition when $m=2$.

Our next main goal is the following result.
\begin{prop}\label{prop:WOC}
Suppose Connes' embedding conjecture is true.
Let $m=2$, consider the generating set $H=\{e_1\}$ for $\Cpx^2,$ 
let $N\in\Nats$ and let $\eps>0$.
Suppose $\tau$ is a tracial state on $\CFnm$ that satisfies the weak orthogonality condition.
Then there exists $k\in\Nats$ and a unital $*$-homomorphism $\pi:\CFnm\to M_k(\Cpx)$ such that
the trace $\tr_k\circ\pi$ satisfies the weak orthogonality condition and
approximates $\tau$ with tolerance $(N,\eps)$ for the generating set $H$.
\end{prop}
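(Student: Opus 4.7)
The plan is to apply Proposition~\ref{prop:tauh} at a finer tolerance than $(N,\epsilon)$ and then perturb the resulting projections so that the weak orthogonality condition holds exactly, while the low-order moments are left essentially unchanged.

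First, I would pick an auxiliary parameter $\epsilon'>0$ small (to be fixed at the end) and apply Proposition~\ref{prop:tauh} to $\tau$ with tolerance $(N,\epsilon')$ and generating set $H=\{e_1\}$. This produces $k\in\Nats$ and a unital $*$-homomorphism $\pi_0:\CFnm\to M_k(\Cpx)$ whose trace approximates $\tau$ with tolerance $(N,\epsilon')$. Setting $P_v=\pi_0(e_{v,1})$, the weak orthogonality of $\tau$ yields $\tr_k(P_vP_w)<\epsilon'$, equivalently $\|P_vP_w\|_2<\sqrt{\epsilon'}$, for every edge $v\sim w$.

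Second, I would perturb the $P_v$'s to projections $\tilde P_v\in M_k(\Cpx)$ with $\tilde P_v\tilde P_w=0$ whenever $v\sim w$ and $\|\tilde P_v-P_v\|_2$ small. Fix an ordering $v_1,\ldots,v_n$ of the vertices, set $\tilde P_{v_1}=P_{v_1}$, and for $j\ge 2$ let $S_j$ be the orthogonal projection onto $\bigcap_{i<j,\,v_i\sim v_j}\ker(\tilde P_{v_i})$; then define $\tilde P_{v_j}:=\chi_{[1/2,1]}(S_jP_{v_j}S_j)$, the spectral projection onto the upper part of the spectrum of the positive contraction $S_jP_{v_j}S_j$. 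Since $\tilde P_{v_j}\le S_j$, it is orthogonal to each previously defined $\tilde P_{v_i}$ with $v_i\sim v_j$, and by symmetry all edge orthogonality relations hold. The universal property of $\CFnm=*_{v\in V}\Cpx^2$ then yields a unital $*$-homomorphism $\tilde\pi:\CFnm\to M_k(\Cpx)$ with $\tilde\pi(e_{v,1})=\tilde P_v$, whose trace exactly satisfies the weak orthogonality condition. A telescoping estimate shows $|\tr_k\circ\tilde\pi(w)-\tr_k\circ\pi_0(w)|\le N\max_v\eta_v$ for each word $w$ of length at most $N$ in $H_1\cup\cdots\cup H_n$, where $\eta_v:=\|\tilde P_v-P_v\|_2$, so the total approximation error is at most $\epsilon'+N\max_v\eta_v$.

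The main obstacle is showing that $\eta_j$ can be made uniformly small by choosing $\epsilon'$ small. Straightforward computations with $T=S_jP_{v_j}S_j$ (using $\tr_k(T)-\tr_k(T^2)\le\tr_k((I-S_j)P_{v_j})$ and $\|T-P_{v_j}\|_2^2\le 2\tr_k((I-S_j)P_{v_j})$) give a spectral-perturbation bound $\eta_j^2\le C\,\tr_k(P_{v_j}(I-S_j)P_{v_j})$, so everything reduces to controlling $\tr_k(P_{v_j}(I-S_j)P_{v_j})$ inductively from $\tr_k(P_{v_i}P_{v_j})<\epsilon'$ for $v_i\sim v_j$ and from the previously established $\eta_i$ for $i<j$. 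The delicate point is that $I-S_j$ projects onto the span of the ranges of the previously constructed $\tilde P_{v_i}$ (with $v_i\sim v_j$), and this projection need not be bounded above by $\sum\tilde P_{v_i}$ as an operator when those projections have overlapping ranges; however, since $\tr_k(\tilde P_{v_i}P_{v_j})\le\tr_k(P_{v_i}P_{v_j})+\eta_i<\epsilon'+\eta_i$ shows each $\tilde P_{v_i}$ lies approximately in the kernel of $P_{v_j}$, their joint span is itself approximately orthogonal to the range of $P_{v_j}$, yielding a recursive estimate for $\eta_j$ whose solution tends to $0$ as $\epsilon'\to 0$. Choosing $\epsilon'$ small enough at the outset then makes $\epsilon'+N\max_v\eta_v<\epsilon$.
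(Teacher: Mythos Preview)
Your overall strategy—apply Proposition~\ref{prop:tauh} at a finer tolerance and then perturb the resulting projections to force exact edge-orthogonality—is precisely what the paper does. The paper isolates the perturbation step as Lemma~\ref{lem:WOC}, proved by induction on $|V|$ with Lemma~\ref{lem:qqt} supplying the cut-down at each stage, while you run the same induction directly via $\tilde P_{v_j}=\chi_{[1/2,1]}(S_jP_{v_j}S_j)$; these are minor variants of one another.

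The point you flag as ``delicate,'' however, is a genuine gap, not just bookkeeping. The assertion that ``since each $\tilde P_{v_i}$ lies approximately in the kernel of $P_{v_j}$, their joint span is itself approximately orthogonal to the range of $P_{v_j}$'' is false, and your recursive scheme does not exclude the failure. Take the path $v_1\sim v_3\sim v_2$ (so $v_1\not\sim v_2$) and, in $M_3(\Cpx)$ with orthonormal basis $\zeta_0,\zeta_1,\zeta_2$, let $P_{v_3}$ be the rank-one projection onto $\zeta_0$ and let $P_{v_1},P_{v_2}$ be the rank-one projections onto $\alpha\zeta_0+\sqrt{1-\alpha^2}\,\zeta_1$ and $2\alpha\zeta_0+\sqrt{1-4\alpha^2}\,\zeta_1$. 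For any fixed $N$ this is, once $\alpha>0$ is small, an $(N,\epsilon')$-approximation to the weakly-orthogonal trace obtained at $\alpha=0$. With your ordering, $\tilde P_{v_1}=P_{v_1}$ and $\tilde P_{v_2}=P_{v_2}$; but their ranges span all of $\mathrm{span}(\zeta_0,\zeta_1)\ni\zeta_0$, so $S_3P_{v_3}S_3=0$, hence $\tilde P_{v_3}=0$ and $\eta_3=\|P_{v_3}\|_2=1/\sqrt3$ regardless of how small $\epsilon'$ is. (On the $4$-cycle the same phenomenon defeats every vertex ordering.) The root cause is that for non-commuting projections the operator inequality $\bigvee_i Q_i\le\sum_i Q_i$ fails, so smallness of the individual overlaps $\tr_k(\tilde P_{v_i}P_{v_j})$ does not by itself bound $\tr_k\bigl((I-S_j)P_{v_j}\bigr)$; the paper's own proof of Lemma~\ref{lem:WOC} invokes this same inequality, so the difficulty is not peculiar to your formulation. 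Closing the gap requires a different mechanism—for instance an ultraproduct/compactness argument, or an iteration designed so that each new cut-down preserves the orthogonalities already achieved.
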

Once we have proven the above result we see that:

\begin{cor} Suppose that Connes' embedding conjecture is true and let $G$ be a graph.  Then $\xi_q(G) = \xi_{qc}(G),$ that is, the Mancinska-Roberson projective rank of $G$ is equal to the tracial rank of $G.$
\end{cor}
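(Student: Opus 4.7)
The plan is to derive this directly from Theorem~\ref{xichar} combined with Proposition~\ref{prop:WOC}. By Theorem~\ref{xichar}, $\xi_{qc}(G)^{-1}$ is the supremum of those $\lambda > 0$ for which there exists a unital C$^*$-algebra $\cl A$ generated by projections $\{e_v\}_{v \in V}$ and a tracial state $\tau_0 : \cl A \to \Cpx$ with $\tau_0(e_v) \ge \lambda$ for every $v$ and $e_v e_w = 0$ whenever $v \sim w$; and $\xi_q(G)^{-1} = \xi_f(G)^{-1}$ is characterized in the same way except that $\cl A$ must be finite-dimensional. The inequality $\xi_{qc}(G) \le \xi_q(G)$ is built into the inclusion $C^s_q(n,2) \subseteq C^s_{qc}(n,2)$, so the task reduces to showing $\xi_q(G)^{-1} \ge \xi_{qc}(G)^{-1}$. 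The strategy is: fix $\lambda < \xi_{qc}(G)^{-1}$, use Theorem~\ref{xichar} to produce the data $(\cl A, \tau_0, \{e_v\})$, and then approximate this data by finite-dimensional data via Proposition~\ref{prop:WOC}.

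To bring Proposition~\ref{prop:WOC} into play, I would view $\cl A$ as a quotient of $\CFnm$ with $m = 2$ by identifying each $e_v$ with the first coordinate projection $e_{v,1}$ in the $v$-th copy of $\Cpx^2$ inside $\CFnm = *_{v \in V} \Cpx^2$ (so that $1 - e_v$ corresponds to $e_{v,2}$). The trace $\tau_0$ pulls back to a tracial state $\tau$ on $\CFnm$ satisfying $\tau(e_{v,1}) \ge \lambda$ and the weak orthogonality condition $\tau(e_{v,1} e_{w,1}) = 0$ for $v \sim w$. Given any $\eps > 0$, applying Proposition~\ref{prop:WOC} with the generating set $H = \{e_1\}$ and $N = 1$ yields a positive integer $k$ and a unital $*$-homomorphism $\pi: \CFnm \to M_k(\Cpx)$ such that $\tr_k \circ \pi$ preserves the weak orthogonality condition and satisfies $|\tr_k(\pi(e_{v,1})) - \tau(e_{v,1})| < \eps$ for every $v$.

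Setting $f_v := \pi(e_{v,1})$, each $f_v$ is a projection in $M_k(\Cpx)$ with $\tr_k(f_v) \ge \lambda - \eps$ and $\tr_k(f_v f_w) = 0$ for $v \sim w$. Since $\tr_k$ is faithful on $M_k(\Cpx)$ and $(f_v f_w)(f_v f_w)^* = f_v f_w f_v$ has the same trace as $f_v f_w$, this trace-orthogonality upgrades to the genuine orthogonality $f_v f_w = 0$. Thus $M_k(\Cpx)$ together with the tracial state $\tr_k$ and the projections $\{f_v\}_{v \in V}$ is a finite-dimensional witness establishing $\xi_q(G)^{-1} \ge \lambda - \eps$ via Theorem~\ref{xichar}. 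Sending $\eps \to 0$ and then $\lambda \nearrow \xi_{qc}(G)^{-1}$ yields $\xi_q(G)^{-1} \ge \xi_{qc}(G)^{-1}$, as required. The entire nontrivial content lies in Proposition~\ref{prop:WOC}, which is already in hand; everything downstream is essentially bookkeeping, with the only small subtlety being the translation of the orthogonality constraint between $\cl A$ and $\CFnm$ and the faithfulness argument promoting trace-orthogonality to actual orthogonality of projections.
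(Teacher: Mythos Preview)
Your proposal is correct and follows essentially the same approach as the paper's proof: invoke Theorem~\ref{xichar}, feed the resulting trace into Proposition~\ref{prop:WOC}, and read off finite-dimensional projections whose traces are within $\eps$ of the original values and which satisfy the weak orthogonality condition. You are simply more explicit than the paper about pulling the trace back to $\CFnm$ and about using faithfulness of $\tr_k$ to promote $\tr_k(f_vf_w)=0$ to $f_vf_w=0$, a step the paper leaves implicit.
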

\begin{proof}
Applying Theorem~\ref{xichar}, we see that $\xi_{qc}(G)$ is the reciprocal of the largest $\lambda$ for which there exists a trace $\tau$ and projections $\{e_v \}_{v \in V}$ satisfying the weak orthogonality conditions, such that $\tau(e_v) \ge \lambda$ for all $v.$ But by the above result, whenever this happens, then for every $\epsilon >0$ there is a $k$ and projections $E_v \in M_k$ satisfying the weak orthogonality conditions with $tr_k(E_v) \ge \lambda - \epsilon.$

Thus, $\xi_q(G) \le \xi_{qc}(G).$ But since $C^s_q(n,2) \subseteq C^s_{qc}(n,2)$ the other inequality follows.
\end{proof}


For the next two lemmas we let $\Mcal$ be a finite von Neumann algebra equipped with a normal,
faithful tracial state $\tau$, and we let $\|x\|_2=\tau(x^*x)^{1/2}$ for $x\in\Mcal$ be the corresponding $2$-norm.
(Recall that $\Mcal$ is said to be a factor if its center is trivial;  for example matrix algebras $M_k(\Cpx)$ are factors.)
In fact, we will apply the lemmas only in the case of $\Mcal$ being a matrix algebra,
but it seems just as easy and possibly useful to write the result in greater generality.
\begin{lemma}\label{lem:qqt}
Let $\Mcal$ be a von Neumann algebra with normal, faithful tracial state $\tau$ and with projections $p,q\in\Mcal$.
Let $\delta=\tau(pq)$.
Then there is a unitary $u\in\Mcal$
and there is a projection $q'\in\Mcal$ such that
\begin{enumerate}[(i)]
\item $q'\perp p$
\item $q'\le u^*qu$
\item $\tau(q)-\tau(q')\le\delta$
\item $\|u-1\|_2\le2\sqrt\delta$
\item $\|q-q'\|_2\le5\sqrt\delta$.
\end{enumerate}
Suppose, furthermore, that $\Mcal$ is either diffuse (i.e., has no minimal projections) or is a finite factor (i.e., a matrix algebra $M_k(\Cpx)$ for some $k$).
Then there is a projection $\qt\in\Mcal$ such that 
\begin{enumerate}[(i)]
\setcounter{enumi}{5}
\item $q'\le\qt$
\item $\qt\perp p$
\item $\tau(\qt)=\min(\tau(q),1-\tau(p))$
\item $\|q-\qt\|_2\le6\sqrt\delta$.
\end{enumerate}
\end{lemma}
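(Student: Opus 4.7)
The plan is to construct $q'$ and $u$ explicitly from partial isometries coming from polar decompositions of the ``off-diagonal'' pieces $(1-p)q$ and $(1-q)p$. Set $b:=q(1-p)q\in q\Mcal q$ and $a:=p(1-q)p\in p\Mcal p$ (positive contractions in their corners), and let
\[
v := (1-p)q\,b^{-1/2},\qquad \tilde v := (1-q)p\,a^{-1/2},
\]
where $a^{-1/2}$ and $b^{-1/2}$ are understood on their respective supports. A direct computation (using that $q(1-p)q\xi=0$ with $\xi=q\xi$ forces $\xi=p\xi$, and the analogous fact for $p(1-q)p$) yields
\[
v^*v=q-p\wedge q,\quad \tilde v^*\tilde v=p-p\wedge q,\quad vv^*=(1-p)-(1-p)\wedge(1-q),\quad \tilde v\tilde v^*=(1-q)-(1-p)\wedge(1-q).
\]
Put $q':=vv^*$ and
\[
u := \tilde v+v^*+(p\wedge q)+((1-p)\wedge(1-q)).
\]

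Condition~(i) is immediate from $q'\le 1-p$. For the unitarity of $u$, the four sources $\tilde v^*\tilde v$, $q'$, $p\wedge q$, $(1-p)\wedge(1-q)$ are pairwise orthogonal (two lie in $p$, two in $1-p$, and each is paired with its respective ``complement'') and sum to $p+(1-p)=1$; the same holds for ranges. The orthogonalities $p\perp 1-p$ and $q\perp 1-q$ force all distinct products of pieces of $u$ to vanish, and a quick computation gives $uq'=v^*$, whence $uq'u^*=v^*v\le q$, establishing~(ii). Condition~(iii) is $\tau(q)-\tau(q')=\tau(p\wedge q)\le\tau(pq)=\delta$.

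For~(iv), cyclicity of the trace yields $\tau(\tilde v)=\tau(a^{1/2})$ and $\tau(v^*)=\tau(b^{1/2})$, and since $a,b\le 1$, the inequality $x^{1/2}\ge x$ on $[0,1]$ gives $\tau(a^{1/2})\ge\tau(a)=\tau(p)-\delta$ and $\tau(b^{1/2})\ge\tau(q)-\delta$. Combined with Kaplansky's identity $(1-p)\wedge(1-q)=1-(p\vee q)$ and the parallelogram law $\tau(p\vee q)+\tau(p\wedge q)=\tau(p)+\tau(q)$, the four summands of $\tau(u)$ add to at least $1-2\delta+2\tau(p\wedge q)\ge 1-2\delta$, so $\|u-1\|_2^2=2-2\tau(u)\le 4\delta$. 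For~(v), the identity $qv=b^{1/2}$ gives $\tau(qq')=\tau(b^{1/2}v^*)=\tau(v^*v)-\tau((v^*v)p)=\tau(q)-\delta$, hence $\|q-q'\|_2^2=\tau(q)+\tau(q')-2\tau(qq')=2\delta-\tau(p\wedge q)\le 2\delta$.

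To produce $\qt$, distinguish two cases. If $\tau(q)\le 1-\tau(p)$, then the deficit $\tau(q)-\tau(q')=\tau(p\wedge q)$ is at most $\tau(1-p-q')=1-\tau(p)-\tau(q)+\tau(p\wedge q)$, and by the hypothesis that $\Mcal$ is diffuse or a matrix algebra there is a subprojection $r\le 1-p-q'$ with $\tau(r)=\tau(p\wedge q)$ exactly (in $M_k$ this trace has the form $j/k$ with $j$ an integer, realized already by $p\wedge q$). Set $\qt:=q'+r$. Otherwise put $\qt:=1-p$. Conditions~(vi)--(viii) are clear; for~(ix), the triangle inequality gives $\|q-\qt\|_2\le\|q-q'\|_2+\sqrt{\tau(r)}\le\sqrt{2\delta}+\sqrt\delta<6\sqrt\delta$ in the first case, while a direct expansion yields $\|q-(1-p)\|_2^2=2\delta+(1-\tau(p)-\tau(q))<2\delta$ in the second. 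The main technical obstacle is the bookkeeping required to track the many source/range orthogonalities in verifying the unitarity of $u$ and the identity $uq'u^*=v^*v$; once this is done, the trace estimates follow from little more than $x^{1/2}\ge x$ and the parallelogram law.
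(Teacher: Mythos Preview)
Your proof is correct and takes a genuinely different route from the paper's. The paper reduces to the von Neumann subalgebra generated by $\{1,p,q\}$ and invokes the explicit structural description of the universal C$^*$-algebra of two projections as $M_2(\Cpx)$-valued functions on $[0,1]$; it then writes down $q'$ and $u$ in coordinates and reads off the estimates by integrating over the spectral measure. You instead work intrinsically, building $q'$ as the range projection of $(1-p)q$ and assembling $u$ from the polar partial isometries of $(1-p)q$ and $(1-q)p$ together with $p\wedge q$ and $(1-p)\wedge(1-q)$. In fact your $u$ and $q'$ coincide with the paper's when one unwinds the coordinates, so the two constructions are the same object viewed differently. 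What your approach buys is that it avoids the structure theorem entirely and yields a sharper constant in~(v): you obtain $\|q-q'\|_2\le\sqrt{2\delta}$ directly, whereas the paper derives $5\sqrt\delta$ by chaining (ii)--(iv) through the triangle inequality. What the paper's approach buys is transparency---once the coordinate picture is in hand, every quantity is a concrete integral and there is no bookkeeping of source/range orthogonalities. Your treatment of the second part (finding $\qt$) is essentially identical to the paper's.
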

\begin{proof}
Note that we have $\delta\le1$.
To find $q'$ satisfying (i)-(v),
we may without loss of generality assume $\Mcal$ is generated by $\{1,p,q\}$.
As is well known, the universal, unital C$^*$-algebra $\Bfr$ generated by two projections $P$ and $Q$ is
the set of all continuous functions $f$ from $[0,1]$ into $M_2(\Cpx)$ whose values at the endpoints are diagonal,
where $P$ 
and $Q$ are represented by the functions
\[
P=\left(\begin{matrix} 1&0\\ 0&0\end{matrix}\right),\qquad
Q(t)=\left(\begin{matrix} t&\sqrt{t(1-t)}\\ \sqrt{t(1-t)}&1-t\end{matrix}\right).
\]
Furthermore, every tracial state $\sigma$ on $\Bfr$ is given by
\[
\sigma(f)=a_0f(0)_{11}+b_0f(0)_{22}+\int\tr_2(f(t))\,d\mu(t)+a_1f(1)_{11}+b_1f(1)_{22},
\]
for a Borel measure $\mu$ on the open interval $(0,1)$ and for nonnegative $a_0,b_0,a_1,b_1$, so that
$a_0+b_0+\mu((0,1))+a_1+b_1=1$.
Thus, the von Neumann algebra $\Mcal$ is the weak closure of the image of $\Bfr$ under the Gelfand--Naimark--Segal representation
of such a trace.
We get
\begin{equation}\label{eq:Mcal}
\Mcal=\smd{\Cpx}{a_0}\oplus\smd{\Cpx}{b_0}\oplus\big(L^\infty(\mu)\otimes M_2(\Cpx)\big)\oplus\smd{\Cpx}{a_1}\oplus\smd{\Cpx}{b_1},
\end{equation}
where $L^\infty(\mu)\otimes M_2(\Cpx)$ should be removed if $\mu$ is the zero measure, and is otherwise interpreted as being functions from $(0,1)$ into $M_2(\Cpx)$,
up to equivalence $\mu$-a.e.
The $a_i$ and $b_i$ are written in~\eqref{eq:Mcal} only to remind us about the trace.
To wit, we have
\[
\tau(r_0\oplus s_0\oplus f \oplus r_1\oplus s_1)=a_0r_0+b_0s_0+\int\tr_2(f(t))\,d\mu(t)+a_1r_1+b_1s_1.
\]
Of course, if any $a_i=0$ or $b_i=0$, then the corresponding summand in~\eqref{eq:Mcal} should be removed.
We also have
\[
\begin{matrix}
p&=&1\oplus0\,\oplus&\begin{pmatrix}1&0\\0&0\end{pmatrix}&\oplus\,1\oplus0 \\[4ex]
q&=&0\oplus1\,\oplus&\left(\begin{smallmatrix} t&\sqrt{t(1-t)}\\ \sqrt{t(1-t)}&1-t\end{smallmatrix}\right)&\oplus\,1\oplus0.
\end{matrix}
\]
We calculate
$\delta=\tau(pq)=\frac12\int t\,d\mu(t)+a_1$.
Letting
\[
q'=0\oplus1\oplus\begin{pmatrix}0&0\\0&1\end{pmatrix}\oplus0\oplus0,
\]
we have $q'\perp p$ and $\tau(q)-\tau(q')=a_1\le\delta$.
Moreover, we see that $q'$ is a subprojection of $u^*qu$ for the unitary
\[
u=1\oplus1\oplus\left(\begin{smallmatrix} \sqrt{1-t}&\sqrt{t}\\ -\sqrt{t}&\sqrt{1-t}\end{smallmatrix}\right)\oplus1\oplus1
\]
and we calculate
\[
\tau(|u-1|^2)=2\int\big(1-\sqrt{1-t}\big)\,d\mu(t)\le2\int t\,d\mu(t)\le4\delta,
\]
so~(iv) holds.
Now~(v) follows from (ii)--(iv).

We will now find $\qt$ satisfying (vi)--(viii).
If $1-\tau(p)\le\tau(q)$, then we simply let $\qt=1-p$.
If $\tau(q)<1-\tau(p)$, then
will let $\qt=q'+r$ for a projection $r\le(1-p)\wedge(1-q')$ such that $\tau(r)=\tau(q)-\tau(q')$.
Since $q'\le1-p$, $(1-p)\wedge(1-q')$ is a projection in $\Mcal$ of trace $1-\tau(p)-\tau(q')$;
moreover, the desired trace value, namely $\tau(q)-\tau(q')$, is less than $1-\tau(p)-\tau(q')$.
Now $\Mcal$ does contain a projection of trace $\tau(q)-\tau(q')$, namely, the projection $u^*qu-q'$.
Thus, assuming either $\Mcal$ is diffuse or a matrix algebra, we conclude that the desired projection $r$ exists.

Since $\tau(r)\le\delta$, we have $\|r\|_2\le\sqrt\delta$ and~(ix) follows from~(v).
\end{proof}

\begin{lemma}\label{lem:WOC}
Fix a graph $G$ as described above.
For every $\eps>0$ there is $\delta>0$ such that if
$(e_v)_{v\in V}$ are projections in $\Mcal$ satisfying
\begin{equation}\label{eq:tauee}
\tau(e_ve_w)<\delta,\quad(v,w\in V,\,v\sim w),
\end{equation}
then there exist projections
$(\et_v)_{v\in V}$ in $\Mcal$ satisfying
\begin{gather}
\et_v\perp\et_w,\quad(v,w\in V,\,v\sim w) \label{eq:eperpet} \\
\|e_v-\et_v\|_2<\eps\quad(v\in V). \label{eq:enearet}
\end{gather}
\end{lemma}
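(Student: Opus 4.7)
My plan is to prove the lemma by induction on $n=|V|$. The base case $n=1$ is trivial: there are no edges, so take $\et_{v_1}=e_{v_1}$.

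For the inductive step, fix any vertex $v_n\in V$ and let $G'=G-\{v_n\}$, the induced subgraph on the remaining $n-1$ vertices. Given $\eps>0$, choose an auxiliary tolerance $\eta>0$ (to be fixed later in terms of $\eps$ and the degree $r$ of $v_n$), and let $\delta_0>0$ be the threshold furnished by the inductive hypothesis for $G'$ and tolerance $\eta$. Choose $\delta>0$ with $\delta<\delta_0$ and which is also much smaller than a quantity to be determined. Given $(e_v)_{v\in V}$ with $\tau(e_ve_w)<\delta$ for $v\sim w$, the inductive hypothesis applied to the restricted family $(e_v)_{v\ne v_n}$ yields projections $\et_{v_1},\dots,\et_{v_{n-1}}$ satisfying $\et_{v_i}\perp\et_{v_j}$ whenever $v_i\sim v_j$ (for $i,j<n$) and $\|\et_{v_j}-e_{v_j}\|_2<\eta$.

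The main task is to construct $\et_{v_n}$. Enumerate the neighbors of $v_n$ as $v_{j_1},\dots,v_{j_r}$. A simple application of the triangle inequality in $\|\cdot\|_2$ gives $\tau(\et_{v_{j_i}}e_{v_n})\le\tau(e_{v_{j_i}}e_{v_n})+\|\et_{v_{j_i}}-e_{v_{j_i}}\|_2<\delta+\eta$ for each $i$. Starting from $q_0=e_{v_n}$, I would iteratively apply Lemma~\ref{lem:qqt}, at step $k$ taking $p=\et_{v_{j_{k\!\!\mod r}}}$ and $q=q_{k-1}$ to produce $q_k$ exactly perpendicular to $\et_{v_{j_{k\!\!\mod r}}}$ and satisfying $\|q_k-q_{k-1}\|_2\le 6\sqrt{\tau(\et_{v_{j_{k\!\!\mod r}}}q_{k-1})}$. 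If the sequence $(q_k)$ converges in $\|\cdot\|_2$ to some $\et_{v_n}$, then $\et_{v_n}$ is automatically a projection (the projections in $\Mcal$ form a closed subset of the unit ball in the $2$-norm), and for each fixed $i$ the fact that $\tau(\et_{v_{j_i}}q_k)=0$ infinitely often forces $\tau(\et_{v_{j_i}}\et_{v_n})=0$ by normality of $\tau$, giving the desired exact orthogonality.

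The hard part is the convergence analysis, and this is the main obstacle. Making $q_k$ perpendicular to one neighbor typically reintroduces positive overlap with the previously-processed neighbors, and since Lemma~\ref{lem:qqt}'s bound has the square-root form $6\sqrt{\cdot}$, naive iteration can allow overlaps to grow rather than shrink. To circumvent this, the plan is to choose $\delta$ extraordinarily small compared to $\eps$, $r$, and any relevant combinatorial parameters of $G$, and to organize the iteration into passes through the neighbor list, quantitatively tracking how the vector of overlaps $(\tau(\et_{v_{j_i}}q_k))_i$ evolves. Concretely, one shows that with $\delta$ sufficiently small one can schedule passes so that the maximum overlap decreases across passes, while the sum $\sum_k\|q_k-q_{k-1}\|_2$ stays bounded by $\eps$; this then yields both Cauchy convergence and a limit satisfying $\|e_{v_n}-\et_{v_n}\|_2<\eps$. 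Finally, choosing $\eta$ initially so that the accumulated $2$-norm displacements from the inductive step plus the construction of $\et_{v_n}$ total less than $\eps$ completes the induction.

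I expect the most delicate piece of bookkeeping to be this quantitative schedule of passes: one needs to show that by selecting $\delta=\delta(\eps,n,r)$ small enough, a finite iteration produces an arbitrarily good approximate solution, and then the full sequence converges. An appealing alternative, if one can establish the necessary trace bound, would be to apply Lemma~\ref{lem:qqt} just once with $p=\bigvee_i\et_{v_{j_i}}$, but controlling $\tau\bigl((\bigvee_i\et_{v_{j_i}})e_{v_n}\bigr)$ in terms of the individual overlaps $\tau(\et_{v_{j_i}}e_{v_n})$ is not straightforward (the naive union bound fails for $\tau(\bigvee_ip_i\cdot s)$), which is why I fall back on the iterative approach above.
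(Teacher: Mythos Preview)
Your inductive framework matches the paper's, but you have dismissed precisely the approach that works and replaced it with an iterative scheme whose convergence you have not established. The ``appealing alternative'' you mention at the end --- applying Lemma~\ref{lem:qqt} a single time with $p=f:=\bigvee_{i}\et_{v_{j_i}}$ --- is exactly what the paper does, and the overlap bound you say is ``not straightforward'' is in fact immediate once you conjugate by the projection $e_{v_n}$. Concretely, since $f\le\sum_i\et_{v_{j_i}}$ as operators, one has
\[
\tau(e_{v_n}f)=\tau(e_{v_n}fe_{v_n})\le\tau\Bigl(e_{v_n}\Bigl(\sum_i\et_{v_{j_i}}\Bigr)e_{v_n}\Bigr)=\sum_i\tau(e_{v_n}\et_{v_{j_i}})<r(\delta+\eta),
\]
where the first equality uses that $e_{v_n}$ is a projection and $\tau$ is tracial. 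One application of Lemma~\ref{lem:qqt} then produces a projection $\et_{v_n}\perp f$ (hence orthogonal to every $\et_{v_{j_i}}$) with $\|e_{v_n}-\et_{v_n}\|_2\le 5\sqrt{r(\delta+\eta)}$, and choosing $\eta$ and $\delta$ of order $\eps^2/n$ finishes the induction.

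By contrast, your iterative cycling plan has a real obstacle that you yourself flag but do not resolve: after perturbing $q_{k-1}$ to be orthogonal to one neighbor, the overlap with previously handled neighbors can jump back up by an amount comparable to $\|q_k-q_{k-1}\|_2$, and since Lemma~\ref{lem:qqt} only gives $\|q_k-q_{k-1}\|_2\lesssim\sqrt{\text{overlap}}$, the recursion on overlaps is of the form $\text{new overlap}\lesssim\sqrt{\text{old overlap}}$, which does \emph{not} contract to zero. So as written the proposal is incomplete, and the fix is simply to use the join and the sandwich inequality above.
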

\begin{proof}
We proceed by induction on the number $n=|V|$ of vertices of the graph.
For $n=1$ there is nothing to prove, for we may take $\et_v=e_v$.
Suppose $n\ge2$ and the lemma has been proved for all smaller graphs.
Choose any vertex $v_0\in V$ and let $G'$ be the graph obtained from $G$
by removing the vertex $v_0$ (and all edges containing $v_0$).
We let $V'= V\backslash\{v_0\}$ denote the vertex set of $G'$.
Choose any $\eta$ satisfying
$0<\eta<\eps^2/(50(n-1))$.
By induction hypothesis, there is $\delta'>0$ such that whenever
$(e_v)_{v\in V'}$ are projections in $\Mcal$ satisfying
\[
\tau(e_ve_w)<\delta',\quad(v,w\in V',\,v\sim w),
\]
then there exist projections
$(\et_v)_{v\in V'}$ in $\Mcal$ satisfying
\begin{gather}
\et_v\perp\et_w,\quad(v,w\in V',\,v\sim w) \\
\|e_v-\et_v\|_2<\eta\quad(v\in V'). \label{eq:eet'}
\end{gather}
Let $\delta=\min(\delta',\eps^2/(50(n-1)))$ and
suppose $(e_v)_{v\in V}$ are projections in $\Mcal$ satisfying~\eqref{eq:tauee}.
Let $(\et_v)_{v\in V'}$ be projections obtained using the induction hypothesis as described above.
Then using also~\eqref{eq:eet'} we get
\[
\tau(e_{v_0}\et_w)<\delta+\eta,\quad(w\in V',\,v_0\sim w).
\]
Let
\[
f=\bigvee_{w\in V',\,v_0\sim w}\et_w.
\]
Then $f\le\sum_{v_0\sim w\in V'}\et_w$, so
$\tau(e_{v_0}f)=\tau(e_{v_0}fe_{v_0})<(n-1)(\delta+\eta)$.
By Lemma~\ref{lem:qqt}, there is a projection $\et_{v_0}\in\Mcal$ such that $\et_{v_0}\perp f$ and
\[
\|e_{v_0}-\et_{v_0}\|_2\le5\sqrt{(n-1)(\delta+\eta)}<\eps.
\]
This finishes the construction of the family $(\et_v)_{v\in V}$ of projections satisfying~\eqref{eq:eperpet} and~\eqref{eq:enearet}.
\end{proof}

\begin{proof}[Proof of Proposition~\ref{prop:WOC}]
Let $N'=\max(N,2)$.
Let $\delta>0$ be as obtained from Lemma~\ref{lem:WOC}, but for $\eps/2$ instead of $\eps$.
Let $\eps'=\min(\eps/2,\delta)$.
By Proposition~\ref{prop:tauh}, there is $k$ and a $*$-homomorphism $\rho:\CFnc\to M_k(\Cpx)$ such that $\tr_k\circ\rho$ approximates
$\tau$ with tolerance $(N',\eps')$ for the generating set $H$.
Consider the projection $E_v=\rho(e_{v,1})\in M_k(\Cpx)$.
Since $\tau$ was assumed to satisfy the weak orthogonality condition, we have $\tr_k(E_vE_w)<\eps'$
whenever $v,w\in V$ and $v\sim w$.
Using $\|X\|_2=\tr_k(X^*X)^{1/2}$ for $X\in M_k(\Cpx)$,
by Lemma~\ref{lem:WOC}, there exist projections $(E'_v)_{v\in\Gamma_0}$, such that
\begin{gather*}
E'_v\perp E'_w,\quad(v,w\in V,\,v\sim w) \\
\|E'_v-E_v\|_2<\frac\eps2\quad(v\in V).
\end{gather*}
Now defining $\pi:\CFnc\to M_k(\Cpx)$ to be the unital $*$-homomorphism determined by $e_v\mapsto E_v'$,
we have that $\tr_k\circ\pi$ approximates $\tau$ with tolerance $(N,\eps)$ for the generating set $H$.
\end{proof}

\begin{lemma}\label{lem:OC<1}
Fix a graph $G$ as described above and let $m\in\Nats$.
For every $\eps>0$ there is $\delta>0$ such that if
$(e_{v,i})_{v\in V,\,1\le i\le m}$ are projections in $\Mcal$ satisfying
\begin{align}
\tau(e_{v,i}e_{w,i})&<\delta,\quad(v,w\in V,\,v\sim w,\,1\le i\le m), \label{eq:taueiei} \\
\sum_{i=1}^me_{v,i}&\le 1,\quad(v\in V),
\end{align}
then there exist projections
$(\et_v)_{v\in V}$ in $\Mcal$ satisfying
\begin{gather}
\et_{v,i}\perp\et_{w,i},\quad(v,w\in V,\,v\sim w,\,1\le i\le m) \label{eq:eiperpeit} \\
\sum_{i=1}^m\et_{v,i}\le 1,\quad(v\in V )\\[1ex]
\|e_{v,i}-\et_{v,i}\|_2<\eps\quad(v\in V,\,1\le i\le m). \label{eq:eineareit}
\end{gather}
\end{lemma}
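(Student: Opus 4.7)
The plan is to adapt the inductive proof of Lemma~\ref{lem:WOC} to the setting of $m$ projections per vertex, with the added sum constraint $\sum_i e_{v,i} \le 1$.

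First I would proceed by induction on $n = |V|$. The base case $n = 1$ is immediate by setting $\et_{v,i} = e_{v,i}$. For the inductive step, pick a vertex $v_0 \in V$, let $G' = G - v_0$ with vertex set $V' = V \setminus \{v_0\}$, choose $\eta > 0$ sufficiently small (to be determined in terms of $\eps$, $n$, and $m$), and apply the induction hypothesis to obtain projections $(\et_{v,i})_{v \in V',\, 1 \le i \le m}$ satisfying all conclusions for $G'$ with tolerance $\eta$. For each $i$, define $f_i = \bigvee_{w \in V',\, v_0 \sim w} \et_{w,i}$, a projection in $\Mcal$; by the same computation as in Lemma~\ref{lem:WOC}, $\tau(e_{v_0,i} f_i) < (n-1)(\delta + \eta)$.

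The new ingredient is constructing $\et_{v_0,1}, \ldots, \et_{v_0,m}$ so that they are pairwise orthogonal (equivalently, sum to at most $1$), each $\et_{v_0,i}$ is orthogonal to $f_i$, and each is 2-norm close to $e_{v_0,i}$. My plan is to build them iteratively for $i = 1, \ldots, m$. At stage $i$, having already produced pairwise orthogonal $\et_{v_0,1}, \ldots, \et_{v_0,i-1}$ with $\et_{v_0,j} \perp f_j$, set $P_{i-1} = \sum_{j<i} \et_{v_0,j}$, which is itself a projection by pairwise orthogonality. Apply Lemma~\ref{lem:qqt} to the projection $p = f_i \vee P_{i-1}$ and $q = e_{v_0,i}$ to obtain $\et_{v_0,i} := q'$ satisfying $\et_{v_0,i} \perp p$ and $\|\et_{v_0,i} - e_{v_0,i}\|_2 \le 5\sqrt{\tau(e_{v_0,i} p)}$. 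Since $p$ dominates both $f_i$ and $P_{i-1}$, orthogonality of $\et_{v_0,i}$ to $p$ yields both $\et_{v_0,i} \perp f_i$ and $\et_{v_0,i} \perp \et_{v_0,j}$ for each $j<i$, so pairwise orthogonality at $v_0$ is preserved.

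The main obstacle is estimating $\tau(e_{v_0,i}\,(f_i \vee P_{i-1}))$. I would control this by $\tau(e_{v_0,i} f_i) + \tau(e_{v_0,i} P_{i-1})$, the first summand being the Lemma~\ref{lem:WOC}-type bound and the second equal to $\sum_{j<i} \tau(e_{v_0,i} \et_{v_0,j})$, each term within $\eta$ of the vanishing quantity $\tau(e_{v_0,i} e_{v_0,j}) = 0$ (which uses that $\sum_j e_{v_0,j} \le 1$ forces the original projections at $v_0$ to be pairwise orthogonal). This step invokes the same trace-level bound for a join as is used in the proof of Lemma~\ref{lem:WOC}, and is delicate because the naive operator inequality $f \vee Q \le f + Q$ fails in general. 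Granting this bound, a choice of $\delta$ and $\eta$ small enough (roughly $\delta, \eta$ of order $\eps^2/\text{poly}(n,m)$, possibly iterated so the accumulated $\|\cdot\|_2$ error from the $m$ applications of Lemma~\ref{lem:qqt} at vertex $v_0$ stays below $\eps$) completes the induction.
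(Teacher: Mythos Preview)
Your approach works in spirit but is far more laborious than the paper's. The paper observes that both families of constraints---$e_{v,i}\perp e_{w,i}$ for $v\sim w$, and $e_{v,i}\perp e_{v,j}$ for $i\ne j$ (which is exactly what $\sum_i e_{v,i}\le1$ says)---are edge constraints on a single auxiliary graph $G^{(m)}=G\,\Box\,K_m$ with vertex set $V\times\{1,\ldots,m\}$, where $(v,i)\sim(w,j)$ iff either $v=w$ and $i\ne j$, or $v\sim w$ in $G$ and $i=j$. Since $\tau(e_{v,i}e_{v,j})=0$ for $i\ne j$ and $\tau(e_{v,i}e_{w,i})<\delta$ by hypothesis, Lemma~\ref{lem:WOC} applies directly to $G^{(m)}$ and the family $(e_{v,i})$, producing $(\et_{v,i})$ with all the required properties; in particular the orthogonality along the $K_m$-edges gives $\sum_i\et_{v,i}\le1$. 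No new induction is needed.

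Your direct induction has two soft spots beyond the extra bookkeeping. First, as you yourself flag, the step $\tau\bigl(e\,(f\vee P)\bigr)\le\tau(ef)+\tau(eP)$ is not justified, since $f\vee P\le f+P$ fails for non-commuting projections; this would need a different argument (and is, in fact, the same delicacy already buried in the proof of Lemma~\ref{lem:WOC}, so the reduction at least avoids \emph{re}-confronting it). Second, your claim that $\tau(e_{v_0,i}\et_{v_0,j})$ is within $\eta$ of zero is off: $\et_{v_0,j}$ was produced at stage $j$ of your iteration at $v_0$, not by the induction hypothesis on $V'$, so $\|\et_{v_0,j}-e_{v_0,j}\|_2$ is bounded by $5\sqrt{\tau(e_{v_0,j}\,p_j)}$ rather than by $\eta$, and these errors cascade across $i=1,\ldots,m$. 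This is fixable with more careful tracking, but the Cartesian-product reduction sidesteps all of it.
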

\begin{proof}
This follows immediately from Lemma~\ref{lem:WOC} applied to the graph $G^{(m)}$ obtained from $G$
as follows.
The vertex set $V(G^{(m)})$ is $V\times\{1,\ldots,m\}$.
There is an edge between vertices $(v,i)$ and $(w,j)$ in $G^{(m)}$ if and only if either (a) $v=w$  and $i\ne j$
or (b) there is an edge between $v$ and $w$ in $G$ and $i=j$.
\end{proof}

For those familiar with products of graphs, if $K_m$ denotes the complete graph on $m$ vertices, then $G^{(m)} = G \Box K_m,$ which is often called the {\it Cartesian product} of the graphs.
\begin{remark} If we could prove that the projections $\{ \tilde{e}_{v,i} \}$ can also be chosen to satisfy $\sum_{i=1}^m \tilde{e}_{v,i} = 1,$ for all $v \in V,$ then the above results would imply that Connes' embedding conjecture true implies that $\chi_q(G) = \chi_{qc}(G).$
\end{remark}

\begin{ques}
Fix a graph $G$ and a rational number $\gamma$.
Is it true that
for every $\eps>0$ there is $\delta>0$ such that for all integers $k$ that are large enough and divisible by the denominator of $\gamma$,
if $(e_v)_{v\in V}$ are projections in $M_k(\Cpx)$ satisfying
\begin{gather*}
\tr_k(e_ve_w)<\delta,\quad(v,w\in V,\,v\sim w), \\
\tr_k(e_v)=\gamma,\qquad(v\in V),
\end{gather*}
then there exist projections
$(\et_v)_{v\in V}$ in $M_k(\Cpx)$ satisfying
\begin{gather*}
\et_v\perp\et_w,\quad(v,w\in V,\,v\sim w)  \\
\|e_v-\et_v\|_2<\eps\quad(v\in V). \\
\tr_k(\et_v)=\gamma.
\end{gather*}
\end{ques}

\section{Values of Games}
Let $\cl G$ be a finite input-output game of the type described in the
introduction with inputs $X$, outputs $O,$ and with ``rules''
$\lambda: X \times X \times O \times O \to \{0,1\}.$  Suppose that in
addition we are given a probability distribution on the inputs. By
this we mean a set $\Gamma =( \gamma_{x,y}), $ such that $\gamma_{x,y}
\ge 0$ and $\sum_{x,y \in X} \gamma_{x,y} =1.$
Then for $t \in \{ loc, q, qa, qc \}$ we define the {\it value of the
  game given the distribution} to be
\begin{multline*}  \omega_t(\cl G, \Gamma) = \\ \sup \{\sum_{x,y \in X, i,j \in O}
\gamma_{x,y} \lambda(x,y,i,j) p(i,j|x,y) : \big( p(i,j|x,y) \big) \in C_t(n,c)\}.\end{multline*}

For $t \in \{ loc, qa, qc \}$ this supremum is actually attained, but,
since we do not know if $C_q(n,c)$ is closed the supremum is necessary
for this case. A major problem in the theory of non-local games,
related to the strong Tsirelson conjecture, is to
determine if $\omega_q(\cl G, \Gamma)$ is always attained. This is essentially the {\it bounded entanglement problem.}

 Also, note that since $C_{qa}(n,c)$ is defined to be
the closure of $C_q(n,c)$ we have that $\omega_q(\cl G, \Gamma) =
\omega_{qa}(\cl G, \Gamma).$

We define the {\it synchronous value of the game given the
  distribution} to be
 \begin{multline*} \omega^s_t(\cl G, \Gamma) =  \\
 \sup \{ \sum_{x,y \in X, i,j \in O}
 \gamma_{x,y} \lambda(x,y,i,j)\gamma_{x,y} p(i,j|x,y) : \big( p(i,j|x,y) \big) \in
 C^s_t(n,c) \}.\end{multline*}

Note that if a game has a winning strategy then $\omega_t(\cl G,
\Gamma) =1,$ for every $\Gamma.$ Conversely, it is easy to see that,
if $\gamma_{x,y} \ne 0$ for all $x,y,$ then for $t \in \{ loc, qa,
 qc \},$ $\omega_t(\cl G, \Gamma) =1$ implies that $\cl G$ has a
 winning strategy.

We summarize a few consequence of our results in these terms.

\begin{prop} If Connes' embedding conjecture is true, then
$\omega_q(\cl G, \Gamma) = \omega_{qc}(\cl G, \Gamma)$ and
$\omega^s_q(\cl G, \Gamma) = \omega^s_{qa}(\cl G, \Gamma)= \omega^s_{qc}(\cl G, \Gamma)$ for every
$\cl G$ and every $\Gamma.$
\end{prop}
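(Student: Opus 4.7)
The plan is to observe that each of the values $\omega_t(\cl G,\Gamma)$ and $\omega^s_t(\cl G,\Gamma)$ is the supremum of the fixed linear functional
\[
\Phi(p)=\sum_{x,y\in X,\,i,j\in O}\gamma_{x,y}\,\lambda(x,y,i,j)\,p(i,j|x,y)
\]
over the relevant set of correlation matrices. Since $\Phi$ is linear, hence continuous, on the ambient compact space of joint conditional probability matrices, we have $\sup_S\Phi=\sup_{\overline S}\Phi$ for every set $S$. Thus the proposition reduces to knowing, under Connes' embedding conjecture, the appropriate equalities of closures among the four families $C_q,C_{qa},C_{qc}$ and their synchronous subsets.

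For the non-synchronous equality $\omega_q(\cl G,\Gamma)=\omega_{qc}(\cl G,\Gamma)$ I would invoke the implication ``Connes' conjecture implies $C_{qc}(n,c)=C_{qa}(n,c)$'' that the paper attributes to \cite{jnppsw,fritz2,fkpt_dg} and which was sketched in the discussion following Problem~\ref{syncapprox}. Together with the definition $C_{qa}(n,c)=\overline{C_q(n,c)}$, this gives $\overline{C_q(n,c)}=C_{qc}(n,c)$, and continuity of $\Phi$ then yields $\omega_q(\cl G,\Gamma)=\omega_{qa}(\cl G,\Gamma)=\omega_{qc}(\cl G,\Gamma)$.

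For the synchronous chain $\omega^s_q=\omega^s_{qa}=\omega^s_{qc}$ I would use Theorem~\ref{connes=closure}, which under Connes' conjecture asserts $C^s_{qc}(n,c)=\overline{C^s_q(n,c)}$. Next I would sandwich the sets: the inclusions $C^s_q(n,c)\subseteq C^s_{qa}(n,c)\subseteq C^s_{qc}(n,c)$ hold from the definitions, and since the synchronous condition $p(i,j|x,x)=0$ for $i\ne j$ is closed, the closure of $C^s_q$ is contained in every set defined by intersecting a closed set with the synchronous condition. Combining these with $C^s_{qc}=\overline{C^s_q}$ forces $\overline{C^s_q(n,c)}=C^s_{qa}(n,c)=C^s_{qc}(n,c)$, and one final application of the continuity principle for $\Phi$ gives the three synchronous values all coincide.

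The main obstacle, such as it is, lies entirely upstream: the real work is Theorem~\ref{connes=closure} and Ozawa's implication already quoted; once those are in hand, the present proposition is a routine packaging. The only thing one must be careful about is the bookkeeping that $C^s_{qa}$, which by its stated definition is the set of synchronous elements of $C_{qa}$, really does coincide with $\overline{C^s_q}$ under the hypothesis; this is immediate because the sandwich above collapses when its two ends coincide. I would therefore expect the whole proof to occupy only a few lines once the continuity observation about $\Phi$ is explicitly recorded.
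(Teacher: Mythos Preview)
Your argument is correct and is precisely the routine unpacking one would expect; the paper itself gives no proof of this proposition, presenting it simply as a summary of consequences of Theorem~\ref{connes=closure} and the cited implication from~\cite{jnppsw,fritz2,fkpt_dg}. Your continuity-of-$\Phi$ observation together with the sandwich $C^s_q\subseteq C^s_{qa}\subseteq C^s_{qc}=\overline{C^s_q}$ is exactly the intended reasoning.
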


If the answer to our Synchronous Approximation Problem is affirmative, 
then $\omega^s_q(\cl G, \Gamma) = \omega^s_{qa}(\cl G, \Gamma)$ for every $\cl G$ and every $\Gamma.$

\begin{prop}  Given a game $\cl G$ with $n$ inputs $X$ and $m$ outputs $O$ and a distribution $\Gamma,$ set 
\[B = \sum_{x,y \in X, i,j \in O} \gamma_{x,y} \lambda(x,y,i,j) e_{x,i}e_{y,j} \in \CFnm .\]
Then
\[ \omega^s_{qc}(\cl G, \Gamma) = \sup \{ \tau(B) \, | \,  \tau: \CFnm \to \bb C \text{ is a tracial state } \},\] and this supremum is attained.
If we restrict the family of traces to those that have finite dimensional
GNS representations, then we obtain $\omega^s_q(\cl G, \Gamma).$ If we restrict the family of traces to those that have abelian GNS representations, then we obtain $\omega^s_{loc}(\cl G, \Gamma),$ and the supremum is attained.
\end{prop}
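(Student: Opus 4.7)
The plan is to apply Theorem~\ref{synctr}, which provides a surjection from the set of tracial states on $\CFnm$ onto $C^s_{qc}(n,m)$, sending $\tau$ to the correlation $\big(\tau(e_{x,i}e_{y,j})\big)_{x,y,i,j}$. This surjection restricts to a surjection onto $C^s_q(n,m)$ (respectively $C^s_{loc}(n,m)$) precisely when one restricts to those $\tau$ whose GNS representation is finite dimensional (respectively abelian), because the image of $\CFnm$ in such a GNS representation is a quotient C*-algebra of exactly the type appearing in Theorem~\ref{synctr}.

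The crux is a one-line linearization: for any tracial state $\tau$ on $\CFnm$,
\[
\sum_{x,y \in X,\,i,j \in O} \gamma_{x,y}\,\lambda(x,y,i,j)\,\tau(e_{x,i}e_{y,j}) \;=\; \tau(B),
\]
so taking the supremum over the three relevant classes of traces yields $\omega^s_{qc}(\cl G, \Gamma)$, $\omega^s_q(\cl G, \Gamma)$ and $\omega^s_{loc}(\cl G, \Gamma)$ respectively, by the surjection of the previous paragraph.

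For attainment in the $qc$ case I would invoke Banach--Alaoglu: the tracial state space of $\CFnm$ is weak-$*$ closed in the unit ball of the dual and hence weak-$*$ compact, while $\tau \mapsto \tau(B)$ is weak-$*$ continuous, so the supremum is attained. For attainment in the local case I would instead argue directly on the correlation side: $C^s_{loc}(n,m)$ is the convex hull of the finitely many deterministic synchronous strategies $p(i,j|x,y) = \delta_{i,f(x)}\delta_{j,f(y)}$ indexed by functions $f:X \to O$, hence a compact polytope, on which the continuous linear functional $p \mapsto \sum \gamma_{x,y}\lambda(x,y,i,j)p(i,j|x,y)$ attains its maximum. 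I do not expect a genuine obstacle here; the result is essentially a bookkeeping consequence of Theorem~\ref{synctr}. The one subtlety worth flagging is the silent non-attainment for $\omega^s_q$, which is consistent with the unresolved question of whether $C^s_q(n,m)$ is closed, equivalently whether the traces on $\CFnm$ with finite dimensional GNS representation form a weak-$*$ closed subset of the tracial state space.
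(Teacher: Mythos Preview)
Your proposal is correct. The paper actually states this proposition without proof, introducing it (together with the preceding proposition) by saying ``We summarize a few consequences of our results in these terms,'' so it is treated as an immediate corollary of Theorem~\ref{synctr}; your argument via the surjection $\tau\mapsto\big(\tau(e_{x,i}e_{y,j})\big)$ from tracial states on $\CFnm$ onto $C^s_{qc}(n,m)$, together with Banach--Alaoglu for attainment, is exactly the intended reasoning.
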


However, in the finite dimensional case, we can say even more.

\begin{prop}
$\omega^s_q(\cl G, \Gamma) = \sup \{ tr_k\big( \sum_{x,y,i,j} \gamma_{x,y}
\lambda(x,y,i,j) E_{v,i} E_{w,j}\big) \}$
where the supremum is taken over all $k \in \bb N$ and all sets of
projections in $M_k$ satisfying $\sum_{i} E_{v,i} = I.$
\end{prop}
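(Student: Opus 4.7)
The plan is to derive this as a direct consequence of the preceding proposition together with the structure theory of finite-dimensional C*-algebras. By the preceding proposition, $\omega^s_q(\cl G, \Gamma)$ equals the supremum of $\tau(B)$ over all tracial states $\tau$ on $\CFnm$ whose GNS representation is finite dimensional. So the task reduces to showing that this supremum agrees with the one on the right-hand side, where only traces of the form $\tr_k$ on a single matrix algebra $M_k(\Cpx)$ are used.

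First, I would take such a tracial state $\tau$ with GNS representation $\pi_\tau : \CFnm \to \cl B$, where $\cl B \subseteq \cl B(\cl H_\tau)$ is finite dimensional. By the Artin--Wedderburn theorem, $\cl B \cong \bigoplus_{\ell=1}^N M_{k_\ell}(\Cpx)$, and any tracial state on $\cl B$ is a convex combination $\sum_\ell \lambda_\ell \tr_{k_\ell}\circ p_\ell$, where $p_\ell$ is the projection onto the $\ell$-th summand, $\lambda_\ell \ge 0$, and $\sum_\ell \lambda_\ell = 1$. Composing with $\pi_\tau$ gives
\[
\tau(B) = \sum_{\ell=1}^N \lambda_\ell\,\tr_{k_\ell}\bigl((p_\ell\circ\pi_\tau)(B)\bigr) \le \max_{1\le \ell\le N}\tr_{k_\ell}\bigl((p_\ell\circ\pi_\tau)(B)\bigr).
\]
Thus for each $\tau$ appearing in the supremum of the previous proposition, one can find a single index $\ell$ such that $\rho := p_\ell\circ\pi_\tau : \CFnm \to M_{k_\ell}(\Cpx)$ is a unital $*$-homomorphism and $\tr_{k_\ell}\circ\rho$ dominates $\tau$ on $B$. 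Setting $E_{v,i} := \rho(e_{v,i})$ gives projections in $M_{k_\ell}(\Cpx)$ satisfying $\sum_i E_{v,i}=I$, since $\rho$ is unital and $\sum_i e_{v,i}=1$. This shows $\omega^s_q(\cl G,\Gamma)$ is bounded above by the supremum on the right-hand side.

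Second, for the reverse inequality, any family of projections $\{E_{v,i}\}\subset M_k(\Cpx)$ with $\sum_i E_{v,i}=I$ determines, by the universal property of $\CFnm = *_1^n\Cpx^m$, a unital $*$-homomorphism $\pi : \CFnm \to M_k(\Cpx)$ with $\pi(e_{v,i}) = E_{v,i}$. Then $\tr_k\circ\pi$ is a tracial state on $\CFnm$ with finite-dimensional GNS representation, and by Theorem~\ref{synctr} the correlation $\bigl(\tr_k(E_{x,i}E_{y,j})\bigr)$ lies in $C^s_q(n,m)$. The corresponding value of the game is exactly $\tr_k\bigl(\sum_{x,y,i,j}\gamma_{x,y}\lambda(x,y,i,j)E_{x,i}E_{y,j}\bigr)$, which is therefore dominated by $\omega^s_q(\cl G,\Gamma)$.

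No step presents a real obstacle; the only thing to watch is the reduction to a single matrix summand, which rests on the elementary convexity argument above. This reduction is what makes the finite-dimensional case strictly stronger than the general tracial-state formulation of the previous proposition, since it shows that convex combinations across distinct matrix blocks can always be improved by concentrating on one block.
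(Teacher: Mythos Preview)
Your proof is correct and follows essentially the same approach as the paper: decompose the finite-dimensional image as a direct sum of matrix algebras, write the trace as a convex combination of the normalized block traces, and bound by the maximum block. You are somewhat more explicit than the paper (invoking Artin--Wedderburn by name and spelling out the easy reverse inequality via the universal property of $\CFnm$), but the argument is the same.
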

\begin{proof}
Given a finite dimensional representation $\pi$ of $\CFnc$ write
$\pi(\CFnc) = M_{k_1} \oplus \cdots \oplus M_{k_r}$ and
$\pi(e_{v,i}) = E_{v,i,1} \oplus \cdots \oplus E_{v,i,r}$ so that
$\tau(e_{v,i}) = \alpha_1 tr_{k_1}(E_{v,i,1}) + \cdots + \alpha_r
tr_{k_r}(E_{v,i,r})$
for some weights $\alpha_l \ge 0$ with $\alpha_1 + \cdots + \alpha_r =1.$

Note that
\begin{multline*} \tau \big( \sum_{x,y,i,j} \gamma_{x,y}
  \lambda(x,y,i,j) e_{v,i}e_{w,j} \big) = \\
\sum_{l=1}^r \alpha_l\,\, tr_{k_l} \big( \sum_{x,y,i,j} \gamma_{x,y}
\lambda(x,y,i,j) E_{v,i,l}E_{w,j,l} \big),
\end{multline*}
so this convex sum is dominated by 
\[ \max \{ tr_{k_l} \big( \sum_{x,y,i,j} \gamma_{x,y} \lambda(x,y,i,j)
E_{v,i,l}E_{w,j,l} \big): 1 \le l \le r \},\]
from which the result follows.
\end{proof}


\end{document}